\definecolor{mno}{rgb}{0.5,0.1,0.5}
\newcommand{\R}{\mathds R}
\newcommand{\N}{\mathds N}
\newcommand{\I}{\mathds 1}
\newcommand{\Z}{\mathds Z}
\newcommand{\lip}{\textmd{Lip}}
\newcommand{\dom}{{\mathcal F}}
\newcommand{\form}{{\mathcal E}}
\newcommand{\cE}{\mathcal{E}}
\newcommand{\cEc}{\mathcal{E}^{(c)}}
\newcommand{\cEj}{\mathcal{E}^{(j)}}
\newcommand{\cF}{\mathcal{F}}
\newcommand{\Ga}{\Gamma}
\newcommand{\al}{{\alpha}}
\newcommand{\be}{{\beta}}
\renewcommand{\th}{{\th}}
\newcommand{\ep}{{\epsilon}}
\newtheorem{theorem}{Theorem}[section]
\newtheorem{lemma}[theorem]{Lemma}
\newtheorem{proposition}[theorem]{Proposition}
\theoremstyle{definition}
\newtheorem{example}[theorem]{Example}
\newtheorem{remark}[theorem]{Remark}
\begin{document}
\allowdisplaybreaks
\title[Conservativeness and recurrence of jump-diffusions]
{On the conservativeness and the recurrence
of symmetric jump-diffusions}
\author{Jun Masamune}
%    Address of record for the research reported here
\address{Department of Mathematics and Statistics,
Penn State, Altoona, 3000 Ivyside Park, Altoona, Pennsylvania 16601 US}
\email{jum35@psu.edu}
%    \thanks will become a 1st page footnote.
%\thanks{The first author was supported in part by NSF Grant \#000000.}

%    Information for second author
\author{Toshihiro Uemura}
\address{Department of Mathematics, Faculty of Engineering Science,
Kansai University, Suita-shi, Osaka 564-8680, Japan}
\email{t-uemura@kansai-u.ac.jp}
%\thanks{Support information for the second author.}

%    Information for second author
\author{Jian Wang}
\address{School of Mathematics and Computer Science, Fujian Normal
University, 350007, Fuzhou, P.R. China}
\email{jianwang@fjnu.edu.cn}
%\thanks{Support information for the second author.}

\subjclass[2000]{Primary 42B20; Secondary 47G20, 31B05, 31C25,
     60J75, 60J25, 60J27}
\keywords{Regular Dirichlet form; jump process; integral-derivation property;
conservation property; recurrence}

\date{\today }

\maketitle{}
\frenchspacing

\begin{abstract}
Sufficient conditions for a symmetric jump-diffusion process to be
conservative and recurrent are given in terms of the
volume of the state space and the jump kernel of the process.
A number of examples are presented to illustrate the optimality of these conditions;
in particular, the situation is allowed to be that the state space
is topologically disconnected but the particles can jump from a
connected component to the other components.
\end{abstract}

\tableofcontents

\section{Introduction and Main Results}\label{section1}
Let $(X,d,m)$ be a metric measure space. We assume that every metric ball
$B(x,r)=\{z\in X: d(x,z)<r\}$ centered at
$x \in X$ with radius $r>0$ is pre-compact, and
the measure $m$ is a Radon measure with full support.
In particular, $X$ is locally compact and separable.
Let $(\form,\dom)$ be a regular symmetric Dirichlet form in
$L^2(X;m)$.
We denote the extended Dirichlet space
of $(\cE,\cF)$ by $\cF_e$, and a quasi continuous version of $u \in \cF_e$ by $\tilde{u}$.
According to the Beurling-Deny theorem,
see, e.g., \cite[Theorem 3.2.1 and Lemma 4.5.4]{FOT94}, we can express $(\cE,\cF)$ as follows
 \begin{align*}
\form(u,v)= &\,\, \form^{(c)}(u,v)+\iint_{x\not=y}\bigl(\tilde{u}(x)-\tilde{u}(y)\bigr)
\bigl(\tilde{v}(x)-\tilde{v}(y)\bigr)\,J(dx,dy) \\
 &  + \int_X \tilde{u}(x)\tilde{v}(x)\,k(dx)
    \quad \mbox{ for any $u,v \in \dom_e$},
\end{align*}
where
$(\form^{(c)}, C_0(X)\cap \dom)$  is a strongly-local symmetric form
and $C_0(X)$ is the space of all real-valued  continuous functions on $X$
with compact support;
    $J$ is a symmetric positive Radon measure
    on the product space $X\times X$ off the diagonal
$\{(x,x): x\in X\}$; and
    $k$ is a positive Radon measure on $X$.

\medskip

Let $\mu_{\langle \cdot, \cdot \rangle}$ be a bounded
signed measure, see \cite[Lemma 3.2.3]{FOT94}, such that
\[
\form^{(c)}(u,v)=\frac 12 \mu_{\langle u,v\rangle}(X)=\frac 12 \int_X
\mu_{\langle u,v\rangle}(dx) \quad\textrm{ for } u,v \in \dom_e.\]
Throughout the paper, we assume the following set (A) of conditions:
%%%%%%%%%%%  COND (A %%%%%%%%%%%%%
\begin{itemize}
\item[{ (A-1)}] the killing measure $k$ does not appear;
    that is, the corresponding
process is {\it no killing inside}.
\item[{ (A-2)}] for each $u,v\in \dom_e$,
the measure $\mu_{\langle u,v\rangle}$ is absolutely continuous with
respect to $m$.
We denote the corresponding Radon-Nikodym density by $\Gamma^{(c)}(u,v)$;
namely,
$$\mu_{\langle u,v\rangle}(dx)=\Gamma^{(c)}(u,v)(x) \,m(dx).$$
\item[{ (A-3)}] the jump measure $J$ has a symmetric kernel
$j(x,dy)$ over $X\times {\mathcal B}(X)$ such that
$$
J(dx,dy)=j(x,dy)\,m(dx) \ \Bigl(=j(y,dx)\,m(dy)=J(dy,dx) \Bigr).
$$
\end{itemize}
For $u,v \in \dom_e$, define
\[\Gamma^{(j)}(u,v)(x)= \int_{x\not=y}
\bigl(\tilde{u}(x)-\tilde{u}(y)\bigr)
 \bigl(\tilde{v}(x)-\tilde{v}(y)\bigr)\,j(x,dy),\]
and
\[\form^{(j)}(u,v)  =  \int \Gamma^{(j)}(u,v)(x)\,m(dx).\]
Therefore, the form $\form$ has the following expression for any $u,v \in \dom_e$:
\begin{align*}
\form(u,v)\! = &\form^{(c)}(u,v)+\form^{(j)}(u,v) \\
 =&\frac 12 \int_X \Gamma^{(c)}(u,v)(x)\,m(dx) +
 \int_X\Gamma^{(j)}(u,v)(x)\, m(dx) \\
= &\frac 12 \int_X \Gamma^{(c)}(u,v)(x)\,m(dx)\!\! +\!\! \iint_{x\not=y}\!
\bigl(\tilde{u}(x)-\tilde{u}(y)\bigr)
 \bigl(\tilde{v}(x)-\tilde{v}(y)\bigr)\,j(x,dy)\,m(dx).
\end{align*}

Let $\psi_K$ be the distance function from a compact set $K$ of $X$, i.e., $\psi_K (\cdot) = \inf_{y \in K} d( \cdot ,y)$.
For every $r>0$, we denote $B(K,r)=\{ x \in X: \psi_K <r\}$ and its closure $\{ x \in X: \psi_K \le r\}$ by $\overline{B}(K,r)$.
Clearly, $B(K,r)$ is pre-compact.
Let $\dom_{\rm loc}$ be the set of measurable functions $u$
such that for each relatively compact open set $G$ of $X$
there exists $w\in \dom$ which satisfies that $u|_G=w|_G$ $m$-a.e.
Additionally, we assume the following set (M) of conditions
so that both $\form^{(c)}$ and $\form^{(j)}$ are compatible with the distance $d$:

\medskip

\begin{itemize}
\item[{ (M-1)}]\ $\psi_K \in \dom_{\rm loc}$ for every compact set $K \subset X$,

\medskip

\item[{ (M-2)}]\  $M_c :={{\rm{ess\, sup}}_{x\in X^{(c)}}} \ \Ga^{(c)}(d,d) (x) < \infty$,

\medskip

\item[{ (M-3)}]\ $M_j:= {{\rm{ess\, sup}}_{x \in X^{(j)}}} \int_{x\not=y} \bigl(1 \wedge d^2(x,y) \bigr) \,  j(x,dy) < \infty,$
\end{itemize}
where $X^{(c)} = \{ x \in X: \Gamma^{(c)} \neq 0\}$ and $X^{(j)} = \{ x \in X: \Gamma^{(j)} \neq 0\}$.

\medskip

There are many classical examples of symmetric diffusions or
symmetric pure jump processes whose Dirichlet form satisfies
conditions (A) and (M): for instance, strongly-local Dirichlet forms
on a metric measure space, whose distance is the
Carnot-Carath\'eodori distance associated with the Dirichlet form.
This includes canonical Dirichlet forms on Riemannian manifolds, CR
manifolds, sub-Riemannian manifolds, and weighted manifolds;
divergence type operators with bounded coefficients on Euclidean
spaces; the sum of squares of vector fields satisfying H\"ormader's
condition, the quantum graphs, and pre-fractals.
Other examples are symmetric $\alpha$-stable L\'evy processes with
$\alpha \in (0,2)$ on Euclidean spaces,  and symmetric random walks
on graphs.

\medskip

Let $A$ be the generator of $(\form,\dom)$ in $L^2(X;m)$.
We denote the associated semigroup and the resolvent by
$(T_t)_{t\ge0}=(e^{tA})_{t\ge0}$ and $G=\int^\infty_0 T_t\, dt$, respectively.
The Dirichlet form $(\cE,\cF)$ is called \emph{conservative} if
\[ T_t 1 \equiv 1,\quad \mbox{ $m$-a.e.\ for any $t>0$} \]
and \emph{recurrent} if
\[G f (x) \equiv 0 \mbox{ or } \infty \mbox{  for any  $f \in L^1_+ (X; m)$
and $m$-a.e.\ $x \in X$}.\] It is a classical result that Brownian
motion on $\R^n$ is conservative for any $n \ge 1$ and is recurrent
if and only if $n=1,2$. This result has been generalized to the
Wiener process of complete Riemannian manifolds, and
one of
the most important discoveries is that a certain bound of the volume
at infinity -- rather than the dimension -- implies these
properties.
This fact was
first found by M.P.\ Gaffney \cite{Gaffney.59} for the
conservativeness, and it has been refined by various methods in
\cite{Azencott.74, KarpLi.83, Takeda.89, Hsu.89,
Davies.92,Grigor'yan.99}. Especially, R.\ Azencott \cite{Azencott.74}
and A.\ Grigor\rq{}yan \cite{Grigor'yan.99} demonstrated that the
conservativeness may fail without a condition on the curvature or
volume. On the other hand, the recurrence of the Wiener process of
Riemannian manifolds or jump processes has been investigated by
several authors in
\cite{ChengYau.75, Karp.82, Varopoulos.83, Grigoryan.83,
Grigoryan.85, Okura.96}. Furthermore, K.\ Th.\ Sturm \cite{Sturm.94}
extended the theory to a general strongly-local regular Dirichlet
form on a metric measure space equipped with the
Carnot-Carath\'eodori distance.

Recently, there has been a tremendous amount of work devoted
to the conservation property of a non-local Dirichelt form; for instance,
the physical Laplacian on an infinite graph
\cite{DodziukMathai.06, Dodziuk.06, Weber.10, Wojciechowski.09,
Wojciechowski.11, KellerLenz.11, Huang.11a, Huang.11b, Huang.12}
and non-local Dirichlet forms
\cite{MasamuneUemura.11, Grigor'yanHuangMasamune.10, Shiozawa.11};
however, as far as the authors know, there is only
one result by Z.-Q.\ Chen and T. Kumagai \cite{ChenKumagai.10} for the Dirichlet form which has both the strongly-local and non-local terms.
Due to its nature, the associated process is called a \emph{jump-diffusion process}.
%Of course, any Dirichlet form with killing inside is never conservative%
%\footnote{Recently, Keller and Lenz \cite{KellerLenz.11} defined and investigated
%a conservation property ``at infinity'' for a Dirichlet form with killing inside.}.

\bigskip

Our first main purpose is to investigate the conservative property of a jump-diffusion process.
For any $x \in X$ and $r>0$,  the volume of $\overline{B}(x,r)$ is denoted by $V(x,r)$.

\begin{theorem} \label{theorem1}
If
\begin{equation} \label{eq;theorem1.1}
\liminf_{r \to \infty}
    \frac{\ln V(x_0,r)}{r \ln r} < \infty,
\end{equation}
for some $x_0 \in X$, then $(\cE,\cF)$ is conservative.
\end{theorem}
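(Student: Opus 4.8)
The plan is to reduce conservativeness to an $L^\infty$-uniqueness statement for the resolvent and then to exclude non-trivial solutions by a Caccioppoli-type energy inequality, iterated along a sequence of radii matched to the volume growth. Recall that $(\cE,\cF)$ is conservative if and only if $\alpha G_\alpha 1=1$ $m$-a.e.\ for some (equivalently every) $\alpha>0$. Fix $\alpha>0$ and set $u:=1-\alpha G_\alpha 1$, so $0\le u\le 1$ $m$-a.e.; passing to the limit in $\cE_\alpha(G_\alpha g_k,w)=(g_k,w)$ along $g_k\uparrow 1$ with $g_k\in L^2_+$ shows that $u\in\dom_{\rm loc}\cap L^\infty$ and that
\[
\cE(u,w)=-\alpha\int_X uw\,dm\qquad\text{for every }w\in\cF\text{ with compact support.}
\]
It thus suffices to show $u\equiv 0$.

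The central step is to test this weak equation against $w=\varphi^2u$, where $\varphi=\phi\circ\psi_{x_0}$ with $\phi\colon[0,\infty)\to[0,1]$ Lipschitz and compactly supported; by (M-1) one has $\psi_{x_0}\in\dom_{\rm loc}$, and a routine argument using (M-2) and (M-3) guarantees that $\varphi^2u\in\cF$ is an admissible test function. The Leibniz rule for $\Gamma^{(c)}$ (available by strong locality), the elementary inequality
\[
(a-b)\bigl(\phi_a^2a-\phi_b^2b\bigr)\ \ge\ \tfrac14\bigl(\phi_a^2+\phi_b^2\bigr)(a-b)^2-\bigl(a^2+b^2\bigr)(\phi_a-\phi_b)^2,\qquad\phi_a,\phi_b\in[0,1],
\]
for the jump bilinear form, and Young's inequality then yield a bound of the shape
\[
\alpha\int_X\varphi^2u^2\,dm\ \le\ C\Bigl(\int_X u^2\,\Gamma^{(c)}(\varphi,\varphi)\,dm+\iint_{x\neq y}\bigl(\varphi(x)-\varphi(y)\bigr)^2u(x)^2\,j(x,dy)\,m(dx)\Bigr).
\]
On the right, (M-2) gives $\Gamma^{(c)}(\varphi,\varphi)\le M_c\,\phi'(\psi_{x_0})^2$, and $|\varphi(x)-\varphi(y)|\le(\|\phi'\|_\infty d(x,y))\wedge 1$ combined with (M-3) bounds the non-local term. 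To render both small one chooses $\phi\equiv 1$ on $B(x_0,R)$, $\phi\equiv 0$ off $B(x_0,R')$, and lets $\phi$ decay \emph{logarithmically} across the annulus, minimising $\|\phi'\|_\infty$ so that even comparatively long jumps across the transition layer contribute only a small increment $(\varphi(x)-\varphi(y))^2$.

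Writing $f(R):=\int_{B(x_0,R)}u^2\,dm$ and using such a cut-off adapted to the shell $B(x_0,R_{k+1})\setminus B(x_0,R_k)$, the inequality is reorganised---this requires some care with the jump term, see below---into a recursion $f(R_k)\le\theta_k\bigl(f(R_{k+1})-f(R_k)\bigr)$ with an explicit $\theta_k=\theta_k(M_c,M_j,\alpha,R_k,R_{k+1})$. Telescoping over radii $R_0<R_1<\cdots<R_N$ and using $u\le 1$ gives
\[
f(R_0)\ \le\ \frac{V(x_0,R_N)}{\prod_{k<N}\bigl(1+\theta_k^{-1}\bigr)}.
\]
The hypothesis supplies radii along which $V(x_0,r)\le\exp\!\bigl(Cr\ln r\bigr)$; choosing $R_N$ among them and the intermediate radii appropriately, the denominator outgrows the numerator as $N\to\infty$, whence $f(R_0)=0$. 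As $R_0>0$ is arbitrary, $u\equiv 0$, i.e.\ $(\cE,\cF)$ is conservative.

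The main obstacle is the non-local contribution. Condition (M-3) controls only $\int_{x\neq y}(1\wedge d^2(x,y))\,j(x,dy)$, which handles short jumps but gives just a bounded total intensity for long ones, so a jump leaving deep inside $B(x_0,R)$ and landing far outside $B(x_0,R')$ can enter the estimate with no a priori decay in the jump length---and, unlike the strongly local term, such contributions do not automatically carry the annular ``difference'' structure $f(R_{k+1})-f(R_k)$ that the iteration relies on. Recovering enough of that structure is precisely where one must play the slow logarithmic decay of $\phi$ against the volume bound on the far shell; this is the point at which the jump-diffusion case genuinely departs from the purely diffusive one, and the remaining (more technical) difficulty is the bookkeeping needed to make the telescoping product quantitatively dominate $\exp(Cr\ln r)$.
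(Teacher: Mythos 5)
Your overall strategy (reduce conservativeness to showing $u=1-\alpha G_\alpha 1\equiv 0$ and run a Caccioppoli-type iteration over annuli) is a legitimate alternative in principle to the paper's route, which instead applies the Davies method directly to $u_t=T_tf$ with an exponential weight $\phi=e^{\alpha\psi}$ and a Gaffney-type caloric energy estimate (Lemma \ref{lemma2}). However, your write-up contains a genuine gap, and it is exactly the one you flag in your last paragraph without resolving: the long-range part of the jump kernel. Under (M-3) the measure $j(x,\cdot)$ restricted to $\{y:d(x,y)\ge 1\}$ has total mass at most $M_j$ but carries no decay in the jump length, so for a cut-off $\varphi$ with $\varphi\equiv 1$ on $B(x_0,R_k)$ and $\varphi\equiv 0$ off $B(x_0,R_{k+1})$ the long-jump contribution to your right-hand side is of order $M_j\int_{B(x_0,R_{k+1})}u^2\,dm=M_jf(R_{k+1})$, not of order $f(R_{k+1})-f(R_k)$: a jump from a point $x$ deep inside the inner ball to a point $y$ far outside the outer ball produces $(\varphi(x)-\varphi(y))^2=1$ no matter how slowly $\phi$ decays across the transition layer, so ``playing the logarithmic decay of $\phi$ against the volume of the far shell'' cannot recover the annular difference structure. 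Consequently the recursion $f(R_k)\le\theta_k\bigl(f(R_{k+1})-f(R_k)\bigr)$ is not obtained, and since $u$ is merely bounded rather than square-integrable, $f(R_{k+1})$ need not even remain under control along the iteration; the telescoping argument does not close as stated.

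The paper's resolution of precisely this difficulty is a structural reduction that your proposal is missing: since $\sup_x j\bigl(x,\{y:d(x,y)>a\}\bigr)<\infty$ for each $a>0$ by (M-3), the truncated form $\form^{(j,a)}+\form^{(c)}$, in which all jumps of length exceeding $a$ are discarded, is conservative if and only if $(\form,\dom)$ is, by the stability of conservativeness under bounded perturbations of the jump kernel (see \cite[Section 4]{RU} and \cite[Section 3]{MasamuneUemura.11}). After this reduction every jump has length at most $a$, so the shell estimates do carry the required difference structure; and --- a second point your sketch does not engage with --- the paper then tunes the truncation level $a$ in \eqref{constant;c} against the radius-dependent weight parameter $\alpha=4c_3\log n_k$ so that the factor $e^{2\alpha a}=n_k^{8ac_3}$ remains subcritical ($8ac_3<1$). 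It is this interplay, unavailable if one works with the untruncated kernel, that allows volume growth $\exp(Cr\log r)$ for arbitrary $C$ rather than $C<1/2$ as in \cite{Grigor'yanHuangMasamune.10}. To repair your argument you would need to import this truncation step (or an equivalent device for the long jumps) before setting up the iteration, and then carry out the quantitative matching of the step sizes $R_{k+1}-R_k$ and the cut-off slopes against $\exp(Cr\log r)$, which your sketch leaves entirely open.
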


This result was obtained for a non-local Dirichlet form in
\cite[Theorem 1.1]{Grigor'yanHuangMasamune.10}, where the left-hand side of \eqref{eq;theorem1.1} is required to be less than  $1/2$.
Let us explain the significance of removing the constant 1/2
by comparing the uniqueness class with
the conservation property.
Let $\mathcal U$ be the set of the solutions to the Cauchy problem of the
heat equation with zero initial data. %on $X \times [0,T]$ for some $T>0$.
If any $u \in \mathcal U$ is identically 0, then $\mathcal U$ is called a
\emph{uniqueness class}.
Under an integrability assumption,
%the assumption on the integral of $u$ on cylinders $B(x_0, R) \times [0,T]$ for some $x_0\in X$ and $R>0$,
determining the uniqueness class implies
the conservativeness of Riemannian manifolds \cite{Grigoryan.86},  Dirichlet forms \cite{Sturm.94}, and graphs \cite{Huang.12}.
In fact, A.\ Grigor'yan \cite{Grigoryan.86} and K.\ Th.\ Sturm \cite{Sturm.94}
established the sharp conservation test for complete Riemannian manifolds
and strongly-local Dirichlet forms, respectively, in this way.
However, X.\ Huang \cite[Section 3.3]{Huang.12} constructed an example of a graph,
which verifies that the constant 1/2 is indeed needed
for the uniqueness class.
Therefore, Theorem \ref{theorem1} together with Huang's example
demonstrates that the uniqueness class condition is
really stronger than the conservation property for a graph.

\bigskip

Next, we turn to the recurrence.
For any $x\in X$ and $r>0$, the volumes of  the closed ball $\overline{B}(x,r)$ intersected with $X^{(c)}$ and $X^{(j)}$ are denoted by $V^{(c)}(x,r)$ and $V^{(j)}(x,r)$, respectively.
For $r>0$, define
\[\omega (r) = \sup_{x \in {X^{(j)}}} \int_{x \neq y}
    \left( d(x,y) \wedge r \right)^2\, j(x,dy).
\]
 Our second main result is
\begin{theorem} \label{theorem2}
If
\begin{equation}\label{thm21}
\liminf_{r \to \infty} \frac{1}{r^2}\bigg[
     V^{(c)}(x_0,r)+V^{(j)}(x_0,r) \omega (r) \bigg]<\infty,\end{equation}
for some $x_0 \in X$, then $(\cE,\cF)$ is recurrent.
\end{theorem}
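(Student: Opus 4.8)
The plan is to establish recurrence via the standard variational characterization: $(\cE,\cF)$ is recurrent if and only if there exists a sequence $(u_n) \subset \cF$ with $0 \le u_n \le 1$, $u_n \to 1$ $m$-a.e., and $\cE(u_n,u_n) \to 0$. So the core task is to construct such a sequence explicitly from the distance function, using the volume growth hypothesis \eqref{thm21}. Fix the reference point $x_0$ and abbreviate $\psi = \psi_{\{x_0\}}$, the distance to $x_0$; by (M-1), $\psi \in \cF_{\rm loc}$, and cut-off functions built from $\psi$ will lie in $\cF$ once we truncate them to have compact support. The candidate is a logarithmic cut-off of the form $u_n = \phi_n \circ \psi$ where, along a subsequence $r_k \to \infty$ realizing the $\liminf$ in \eqref{thm21}, $\phi_n$ is $1$ on $[0, r_k]$, decreases on $[r_k, R_k]$, and vanishes on $[R_k,\infty)$, with the decay profile and the ratio $R_k/r_k$ chosen to balance the continuous and jump contributions against the two volume terms $V^{(c)}(x_0,R_k)$ and $V^{(j)}(x_0,R_k)\,\omega(R_k)$.

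The estimate then splits into the two pieces of the Beurling--Deny decomposition. For the strongly-local part, the chain rule gives $\Gamma^{(c)}(u_n,u_n) = (\phi_n'\circ\psi)^2\,\Gamma^{(c)}(\psi,\psi)$, and since $\psi$ is $1$-Lipschitz one bounds $\Gamma^{(c)}(\psi,\psi) \le M_c$ $m$-a.e.\ on $X^{(c)}$ by (M-2) (here one uses that $\Gamma^{(c)}(d,d)$ controls $\Gamma^{(c)}(\psi,\psi)$ for a distance-type function, as in Sturm's framework); integrating, $\cE^{(c)}(u_n,u_n) \le M_c \int (\phi_n'\circ\psi)^2\,dm \le M_c \|\phi_n'\|_\infty^2\, V^{(c)}(x_0,R_k)$, or better, $M_c\,\sup_t |\phi_n'(t)|^2$ weighted by the annular volume. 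For the jump part, one uses the elementary inequality $|\phi_n(s)-\phi_n(t)|^2 \le \|\phi_n'\|_\infty^2\, |s-t|^2$ together with $|\psi(x)-\psi(y)| \le d(x,y)$ and the boundedness $|\phi_n|\le 1$, to get the pointwise bound $\Gamma^{(j)}(u_n,u_n)(x) \le \|\phi_n'\|_\infty^2 \int_{x\ne y}(d(x,y)\wedge \text{diam of support})^2\,j(x,dy)$, and then integrating over the support of $u_n$ — which sits inside $\overline{B}(x_0,R_k)$ — produces exactly the factor $V^{(j)}(x_0,R_k)\,\omega(R_k)$ via the definition of $\omega$. Summing, $\cE(u_n,u_n) \lesssim \|\phi_n'\|_\infty^2\big[V^{(c)}(x_0,R_k) + V^{(j)}(x_0,R_k)\,\omega(R_k)\big]$.

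The remaining step is to choose $\phi_n$ so that the right-hand side tends to $0$ while $u_n \uparrow 1$. A crude linear cut-off ($\phi_n$ linear on $[r_k,R_k]$, so $\|\phi_n'\|_\infty = 1/(R_k-r_k)$) already succeeds if one takes $R_k = 2 r_k$ and exploits that $\liminf_k r_k^{-2}[V^{(c)}(x_0,r_k)+V^{(j)}(x_0,r_k)\omega(r_k)] < \infty$ forces the bracket at scale $2r_k$ to be $O(r_k^2)$ along a (possibly further) subsequence — this is where one must be a little careful, since \eqref{thm21} only controls the bracket at radius $r$, not at radius $2r$; the resolution is to either (a) argue that $\omega$ and the volumes are sufficiently regular (doubling-type) under (M-2)--(M-3), or (b) replace the linear profile by the sharper logarithmic one, $\phi_n(t) \propto \log(R_k/t)/\log(R_k/r_k)$ on $[r_k,R_k]$ with $R_k \to \infty$ chosen adaptively, which is the classical device for borderline volume growth and for which $\int (\phi_n')^2$ against annular volume telescopes nicely. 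The main obstacle, then, is precisely this passage from the $\liminf$ hypothesis at a single scale to a genuinely convergent-energy cut-off sequence: handling the geometry of the annuli and the scale-mismatch so that the product with $\|\phi_n'\|_\infty^2$ vanishes. Once that is arranged, $0\le u_n\le1$, $u_n\to1$ $m$-a.e., $\cE(u_n,u_n)\to0$, and the variational criterion yields recurrence of $(\cE,\cF)$.
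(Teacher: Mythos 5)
Your energy estimates are essentially the paper's: a radial cut-off, the chain rule plus (M-2) for the strongly-local part, and the splitting of jumps into those shorter and longer than a multiple of $R$ (using the Lipschitz bound for the former and $0\le u\le 1$ for the latter) to produce the factor $V^{(j)}(x_0,R)\,\omega(R)$. The genuine gap is in the concluding step, and it is precisely the "main obstacle" you flag but do not resolve. The scale mismatch is self-inflicted: there is no need to take the inner radius $r_k\to\infty$. The paper's test function is
\[
\theta_R(x)=\Bigl(\tfrac{R-d(x,x_0)}{R-1}\wedge 1\Bigr)_+ ,
\]
which equals $1$ on $B(x_0,1)$, is $\tfrac1{R-1}$-Lipschitz, and is supported in $\overline B(x_0,R)$. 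For each fixed $x$ one still has $\theta_R(x)\to 1$ as $R\to\infty$, and the whole energy bound
$\cE[\theta_R]\le c\,R^{-2}\bigl[V^{(c)}(x_0,R)+V^{(j)}(x_0,R)\,\omega(R)\bigr]$
involves only the single scale $R$, so it is \emph{bounded} along the subsequence realizing the $\liminf$ in \eqref{thm21}. No doubling and no logarithmic profile are needed.

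The price is that one must then use the correct form of the recurrence criterion: a sequence $(u_n)\subset\cF$ with $0\le u_n\le 1$, $u_n\to 1$ $m$-a.e.\ and $\liminf_n\cE[u_n]<\infty$ already forces recurrence (this is [FOT94, Theorem 1.6.3] combined with Silverstein's (1.6.1), (1.6.1'), via a Ces\`aro-mean argument). Your version of the criterion, demanding $\cE(u_n,u_n)\to 0$, is what pushes you toward annular cut-offs and the unresolved difficulty, and neither of your proposed fixes is available here: volume doubling is not among the hypotheses and does not follow from (M-2)--(M-3); and the logarithmic profile's energy is a sum over \emph{all} intermediate dyadic scales of $V^{(c)}$ and of $V^{(j)}\cdot\omega$, which the bare $\liminf$ hypothesis at a single sequence of radii does not control (this is already an issue in the purely local case, and worse here because of the factor $\omega(r)$). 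Replace the final step by the fixed-inner-radius cut-off plus the bounded-energy criterion and the proof closes.
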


Theorem \ref{theorem2} was proven in the case of the Wiener process (namely, the process does not jump) on a complete Riemannian manifold by S.Y.\ Cheng and S.T.\ Yau \cite{ChengYau.75}. Theorem \ref{theorem2} is sharp for an isotropic symmetric $\alpha$-stable L\'{e}vy process on $\R^n$, see, e.g., \cite[Corollary 37.17 and Theorem 37.18]{SA} or Example \ref{th23} in Section \ref{examples}. Here, let us mention that \cite[Corollary 37.17 and Theorem 37.18]{SA} are derived from the characteristic functions of the associated processes, see \cite{SW} for the recent development on this topic; while Theorem \ref{theorem2} is based on the theory of Dirichlet forms.

\bigskip

This paper is organized as follows.
Section \ref{preliminaries} is devoted to the preliminaries.
Here we establish an integral-derivation type property for a Dirichlet form of jump-process type, which is a technical key to prove the conservation property.
The main results, Theorems \ref{theorem1} and \ref{theorem2}, are proved
in Sections \ref{proofoftheorem1} and \ref{proofoftheorem2},
respectively.
Finally, in Section \ref{examples} we present some examples
of symmetric jump-diffusions to illustrate the power of our main theorems.

%%%%%%%%%%%%% SECTION %%%%%%%%%%%%%%%%%%%%
%%%%%%%%%%%%% SECTION %%%%%%%%%%%%%%%%%%%%
%%%%%%%%%%%%% SECTION %%%%%%%%%%%%%%%%%%%%
%%%%%%%%%%%%% SECTION %%%%%%%%%%%%%%%%%%%%
%%%%%%%%%%%%% SECTION %%%%%%%%%%%%%%%%%%%%

\section{Preliminaries: the Integral-Derivation Property}
\label{preliminaries}
In this section, we first prepare the preliminaries
and then proceed to establish an integral-derivation type property
for a Dirichlet form with jump-diffusion type.
This will be used to prove the
conservation property in the next section.

We begin with the following quite elementary fact.
%%%%%%%%%%%% LEMMA %%%%%%%%%%%%%%%%%%%%%
%%%%%%%%%%%% LEMMA %%%%%%%%%%%%%%%%%%%%%
%%%%%%%%%%%% LEMMA %%%%%%%%%%%%%%%%%%%%%
%%%%%%%%%%%% LEMMA %%%%%%%%%%%%%%%%%%%%%
\begin{lemma} \label{lemma-1}
If $u \in \dom_{\rm loc} \cap L^\infty$
has compact support, where $L^\infty = L^\infty(X)$ is the space of
real-valued bounded measurable functions on $X$, then
$u \in \dom\cap L^\infty$.
\end{lemma}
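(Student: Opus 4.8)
The plan is to turn $u$ into a genuine element of $\dom$ by multiplying a local representative by a cutoff function supplied by regularity. First I would set $K:=\operatorname{supp}u$, which is compact by hypothesis, and fix a relatively compact open set $G$ with $K\subset G$ (possible since $X$ is locally compact). By the definition of $\dom_{\rm loc}$ there is $w\in\dom$ with $u=w$ $m$-a.e.\ on $G$. Since $u\in L^\infty$, I would replace $w$ on $G$ by its truncation $w':=(-c)\vee(w\wedge c)$ with $c:=\|u\|_{L^\infty}$; normal contractions operate on a Dirichlet form, so $w'\in\dom\cap L^\infty$, and $w'=u$ still holds $m$-a.e.\ on $G$ because $|u|\le c$ there.

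Next I would invoke regularity of $(\form,\dom)$ to produce a cutoff function $\varphi\in\dom\cap C_0(X)$ with $0\le\varphi\le1$, $\varphi\equiv1$ on $K$, and $\operatorname{supp}\varphi\subset G$ (the standard Urysohn-type statement for regular Dirichlet forms, cf.\ \cite{FOT94}). Since $\dom\cap L^\infty$ is an algebra, the product $v:=\varphi w'$ lies in $\dom\cap L^\infty$. It then remains to check $v=u$ $m$-a.e.: on the set $\{\varphi\ne0\}\subset G$ one has $w'=u$, so $v=\varphi u$ $m$-a.e.\ on all of $X$; and $\varphi u=u$ because $\varphi=1$ on $K$ while $u=0$ $m$-a.e.\ off $K$. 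Hence $u=v\in\dom\cap L^\infty$.

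The only non-formal ingredient is the existence of the cutoff $\varphi$; everything else (stability of $\dom$ under truncation and the algebra property of $\dom\cap L^\infty$, with the estimate $\form(fg,fg)^{1/2}\le\|f\|_{L^\infty}\form(g,g)^{1/2}+\|g\|_{L^\infty}\form(f,f)^{1/2}$) is part of the basic calculus of Dirichlet forms. If one preferred to avoid citing the cutoff lemma directly, one could instead take a continuous Urysohn function for the pair $(K,G)$, approximate it in sup norm by elements of the core $\dom\cap C_0(X)$, and normalize; but invoking regularity is cleaner. I expect no genuine obstacle here: the lemma is essentially a bookkeeping step that later licenses multiplying and truncating $\psi_K$ (via condition (M-1)) to obtain compactly supported functions in $\dom$.
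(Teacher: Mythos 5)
Your argument is correct and follows essentially the same route as the paper: take a local representative of $u$ on a relatively compact neighbourhood of $\operatorname{supp}u$ and multiply it by a cutoff function in $\dom\cap C_0(X)$ supplied by regularity, then use the algebra property of $\dom\cap L^\infty$. The only difference is that you explicitly truncate the local representative to make it bounded, a detail the paper's proof passes over by simply asserting the representative lies in $\dom\cap L^\infty$; this is a harmless and slightly more careful rendering of the same proof.
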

\begin{proof}
Suppose that supp\,$u\subset K$ with a compact set $K$.
Let $\eta \in \dom \cap L^\infty$ agree with $u$ on $B(K,1)$.
Because of the regularity and the fact that the constant function belongs to $\dom_{\rm loc}$,
see the remark in \cite[Page 117]{FOT94}, there is a function $\chi \in \dom \cap L^\infty$
such that $\chi|_{K} = 1$ and supp\,$\chi \subset B(K,1)$.
Since $\eta\chi \in \dom$ and $u =\eta\chi$, the statement follows.
\end{proof}

For the sake of simplicity, hereafter we denote $\Gamma[\cdot]=\Gamma(\cdot,\cdot)$,
$\cE [\cdot]= \cE(\cdot,\cdot)$, etc.
We say that the jump range of $\form$ or $\form^{(j)}$
is uniformly bounded, if there exists a constant $a>0$ such that
supp$(j(x,\cdot)) \subset B(x,a)$ for every $x \in X$.

%%%%%%%%%%%% LEMMA %%%%%%%%%%%%%%%%%%%%%
%%%%%%%%%%%% LEMMA %%%%%%%%%%%%%%%%%%%%%
%%%%%%%%%%%% LEMMA %%%%%%%%%%%%%%%%%%%%%
%%%%%%%%%%%% LEMMA %%%%%%%%%%%%%%%%%%%%%
\begin{lemma} \label{lemma0}
Suppose that the jump range of $\form$ is uniformly bounded.
If $u \in \dom_{\rm loc} \cap L^\infty$ is constant outside a
compact set, then for any $v\in \dom\cap L^\infty$, $uv\in \dom \cap L^\infty.$
\end{lemma}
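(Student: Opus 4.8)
The plan is to peel off the constant tail of $u$ and then reduce to Lemma~\ref{lemma-1} together with the algebra property of $\dom\cap L^\infty$. Let $K$ be a compact set outside of which $u$ equals a constant $c$, and put $u_0 = u - c$. Since the constant function lies in $\dom_{\rm loc}$ (the remark in \cite[Page 117]{FOT94}), $u_0 \in \dom_{\rm loc}\cap L^\infty$, and $\mathrm{supp}\,u_0 \subset K$ is compact; hence Lemma~\ref{lemma-1} gives $u_0\in\dom\cap L^\infty$. Now write $uv = cv + u_0 v$. As $\dom$ is a linear space and $\|cv\|_\infty = |c|\,\|v\|_\infty < \infty$, the term $cv$ already lies in $\dom\cap L^\infty$, so the proof is reduced to showing $u_0 v\in\dom\cap L^\infty$ --- i.e.\ to the stability of $\dom\cap L^\infty$ under products.

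For that, $\|u_0 v\|_\infty\le\|u_0\|_\infty\|v\|_\infty$ is immediate and $u_0 v\in L^2$ since it is bounded with compact support; the membership $u_0 v\in\dom$ is the algebra property of $\dom\cap L^\infty$ for a regular Dirichlet form, \cite[Theorem 1.4.2]{FOT94}. For later use one also records the pointwise estimates behind this: the Leibniz rule for the strongly local part gives $\Gamma^{(c)}[u_0 v]\le 2\|u_0\|_\infty^2\,\Gamma^{(c)}[v] + 2\|v\|_\infty^2\,\Gamma^{(c)}[u_0]$ $m$-a.e., while the identity $u_0(x)v(x)-u_0(y)v(y) = u_0(x)\bigl(v(x)-v(y)\bigr) + v(y)\bigl(u_0(x)-u_0(y)\bigr)$ together with $(p+q)^2\le 2p^2+2q^2$ gives $\Gamma^{(j)}[u_0 v](x)\le 2\|u_0\|_\infty^2\,\Gamma^{(j)}[v](x) + 2\|v\|_\infty^2\,\Gamma^{(j)}[u_0](x)$ for $m$-a.e.\ $x$; integrating over $X$ and using $u_0,v\in\dom$ yields $\form[u_0 v]\le 2\bigl(\|u_0\|_\infty^2\,\form[v] + \|v\|_\infty^2\,\form[u_0]\bigr)<\infty$.

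The one genuine point --- hence the hard part --- is that ``$u$ constant outside a compact set'' cannot be relaxed to ``$u\in\dom_{\rm loc}\cap L^\infty$'': in general $\dom_{\rm loc}\cdot(\dom\cap L^\infty)\not\subseteq\dom$, and it is precisely by splitting off the constant tail, whose product with $v\in\dom$ stays in $\dom$, that the problem is brought within the scope of Lemma~\ref{lemma-1}. Beyond that the only analytic input is the algebra property \cite[Theorem 1.4.2]{FOT94}, whose proof is the standard one --- truncate $u_0$ and $v$ to obtain sequences bounded in $\dom$ and convergent in $L^2$, then extract convex combinations converging in the form norm (Banach--Saks), the limit being $u_0 v$. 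Finally, the standing assumption that the jump range of $\form$ be uniformly bounded, say $\mathrm{supp}\,j(x,\cdot)\subset B(x,a)$, is felt here only through the jump part: because constants cancel in the jump integral, $\Gamma^{(j)}[u]=\Gamma^{(j)}[u_0]$ is supported in $\overline{B}(K,a)$, and with (M-3) this makes it a bounded, compactly supported density --- a fact used in the subsequent integral-derivation estimates rather than in the qualitative statement itself.
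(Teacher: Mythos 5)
Your proof is correct, and it takes a genuinely different --- and shorter --- route than the paper's. You peel off the constant tail, $u=c+u_0$, note that $u_0\in\dom_{\rm loc}\cap L^\infty$ has compact support so that Lemma \ref{lemma-1} already puts $u_0$ into $\dom\cap L^\infty$, and then handle $u_0v$ by the algebra property of $\dom\cap L^\infty$ (\cite[Theorem 1.4.2]{FOT94}) and $cv$ by linearity. The paper instead approximates $uv$ directly: it multiplies by cut-offs $\chi_l$ adapted to $B(K,l)$, uses Lemma \ref{lemma-1} to place each $v_l=uv\chi_l$ in $\dom$, and proves that $(v_l)$ is $\form$-Cauchy by reducing $\form[v_l-v_{l'}]$ to $\form[\chi_{l,l'}v]$ (possible because $\chi_l-\chi_{l'}$ lives where $u$ is constant) and then estimating the local part via the chain rule and the jump part via the Lipschitz bound $l^{-1}$ on $\chi_l$ together with the uniformly bounded jump range and (M-3); the limit $uv$ then lies in $\dom_e\cap L^2=\dom$ by \cite[Theorem 1.5.2 (iii)]{FOT94}. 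Your argument buys two things: it avoids the Cauchy-sequence computation entirely, and it makes visible that the hypothesis of a uniformly bounded jump range is not actually needed for this qualitative statement (the paper's proof does invoke it, to dominate $\int d(x,y)^2\,j(x,dy)$ by $(1+a^2)M_j$). What the paper's computation buys is the explicit quantitative control of expressions of the form $\form[\chi_{l,l'}v]$, which is the template reused in the proof of Theorem \ref{theorem1}; since you record the analogous pointwise bounds on $\Gamma^{(c)}[u_0v]$ and $\Gamma^{(j)}[u_0v]$ as a by-product, nothing essential is lost.
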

\begin{proof} Let $K \subset X$ be a compact set such that $u$ is constant outside it.
Consider the sequence of cut-off functions $(\chi_l)_{l\in\N}$, where for $l \ge 1$,
\[\chi_l = \left( (2 - l^{-1}\psi) \wedge 1 \right)_+.\]
By Lemma \ref{lemma-1}, the function $\chi_l $ belongs to $\dom$ for any $l\ge1$. Obviously, $\chi_l = 1$ on $B(K,l)$ and supp$(\chi_l)
    \subset %{C(K,2l)}:=
\overline{B}(K,2l)$.%\{ x \in X : d(x,K) \le 2l\}$.

    We set for any $l \ge 1$, $v_l =u v \chi_l$.
Since $u  \in \dom_{\rm loc} \cap L^\infty$ and $v\in \dom\cap L^\infty$,
$v_l$ belongs to $\dom_{\rm loc} \cap L^\infty$ and has compact support.
Hence, Lemma \ref{lemma-1} shows that $v_l \in \dom$ for any $l\ge1$.

Next, we claim that the sequence $(v_l)_{l \ge 1}$ is $\form$-Cauchy.
Set $\chi_{l,l\rq{}} = \chi_l - \chi_{l\rq{}}$ for $l,l\rq{} \ge 1$.
Since the jump range of $\form$ is uniformly bounded,
for large enough $l$ and $l\rq{}$,
\begin{align*}
\form [ v_l - v_{l\rq{}} ]
=
\form [ (\chi_l - \chi_{l\rq{}}) u v]
= \kappa\cdot \form [ \chi_{l,l\rq{}}v  ],
\end{align*}
where $\kappa=u|_{K^c}$.
By \cite[Lemma 3.2.5]{FOT94},
\[
\form^{(c)} [ \chi_{l,l\rq{}} v ]
\le
 2 \int v^2 \Gamma^{(c)} [ \chi_{l,l\rq{}}]\,dm + 2 \int  \chi^2_{l,l\rq{}} \Gamma^{(c)} [v]\,dm.\]
Because of (M) and the chain rule of the strongly-local Dirichlet form, see, e.g., \cite[Page 190]{Sturm.94},
$ \Gamma^{(c)} [ \chi_{l,l\rq{}}]\to0$ as $l,l\rq{}\to\infty$.
This together with the fact $\chi_{l,l\rq{}}\to0$ as $l,l\rq{}\to\infty$ yields that
$\form^{(c)} [ \chi_{l,l\rq{}} v ]$ tends to zero as $l,l\rq{}\to\infty$.

On the other hand,
\begin{align*}
\form^{(j)} [ \chi_{l,l\rq{}} v]
\le
&\,2 \int v^2(x) \int ( \chi_{l,l\rq{}} (x) - \chi_{l,l\rq{}} (y) )^2\, j(x,dy)\,m(dx) \\
&\,+
2\iint \chi_{l,l\rq{}} ^2 (y) (v(x) - v(y))^2\, j(x,dy)\,m(dx)\\
=&: { (I) + (I\!I)}.
\end{align*}
For any $x\in X$,  \begin{align*}
   &\int ( \chi_{l,l\rq{}} (x) - \chi_{l,l\rq{}} (y) )^2\, j(x,dy) \\
&=
\int  \big( (\chi_l (x) - \chi_l (y)) -  ( \chi_{l\rq{}} (x) - \chi_{l\rq{}} (y) ) \big)^2\, j(x,dy) \\
&\le
2
\int ( \chi_l (x) - \chi_l (y) )^2\, j(x,dy)
+
2\int ( \chi_{l\rq{}} (x) - \chi_{l\rq{}} (y) )^2\, j(x,dy) \\
&\le
2 ( l^{-2}+{l\rq{}}^{-2}) \int d(x,y)^2\, j(x,dy).
\end{align*}
Combining the fact that supp\,$(j(x,dy)) \subset B(x,a)$ for all $x\in X$ and some $a>0$ with the assumption (M),
the last term in the right-hand side of the equation above is dominated by
\begin{align*}
2(1 + a^2)M_j \left(l^{-2} + {l\rq{}}^{-2} \right),
\end{align*}
which tends to 0 as $l,l\rq{}\to\infty$.
Hence ${(I)}\to 0$ as $l,l\rq{}\to\infty$. Since $ \chi_{l,l\rq{}} \to 0$, $m$-a.e.\ as $l,l\rq{}\to\infty$,
${(I\!I)} \to 0$ as $l,l\rq{}\to\infty$.
Thus, $\form^{(j)} [ \chi_{l,l\rq{}} v]\to 0$ as $l,l\rq{}\to\infty$,
and so the desired claim follows.

Finally, since $v_l \to uv $, $m$-a.e.\ as $l\to\infty$, $ uv \in \dom_e$.
This together with the fact $ u v\in L^2$ and \cite[Theorem 1.5.2 (iii)]{FOT94} yields that $u v \in \dom$.
\end{proof}

%%%%%%%%%%%%%%% LEMMA %%%%%%%%%%%%%%%%%
%%%%%%%%%%%%%%% LEMMA %%%%%%%%%%%%%%%%%
%%%%%%%%%%%%%%% LEMMA %%%%%%%%%%%%%%%%%
%%%%%%%%%%%%%%% LEMMA %%%%%%%%%%%%%%%%%
%%%%%%%%%%%%%%% LEMMA %%%%%%%%%%%%%%%%%

The following is the integral-derivation property for our Dirichlet form.
\begin{lemma} \label{lemma1}
Suppose that the jump range of $\form$ is uniformly bounded.
If $u \in \dom \cap L^\infty$ and $\phi \in \dom_{\rm loc} \cap L^\infty$ is constant outside a
compact set, then
\begin{equation} \label{eq; 2.5}
\form(u, u \phi)    =\int  u \Gamma (u,\phi)\,dm
    +\int \phi \Gamma[u]\,dm,
\end{equation} where $\Gamma=\frac{1}{2}\left(\Gamma^{(c)}+\Gamma^{(j)} \right).$
 \end{lemma}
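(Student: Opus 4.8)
The plan is to split the form into its strongly-local and jump parts and verify the identity for each piece separately, exploiting that everything in sight is a legitimate element of $\dom$ thanks to Lemma~\ref{lemma0}. First I would record that $u\phi\in\dom\cap L^\infty$: indeed $\phi\in\dom_{\rm loc}\cap L^\infty$ is constant outside a compact set and $u\in\dom\cap L^\infty$, so Lemma~\ref{lemma0} applies directly. Hence all three bilinear expressions in \eqref{eq; 2.5} make sense. For the local part, the statement
\[
\form^{(c)}(u,u\phi)=\int u\,\Gamma^{(c)}(u,\phi)\,dm+\frac12\int\phi\,\Gamma^{(c)}[u]\,dm
\]
is precisely the Leibniz/chain rule for the energy measure of a strongly-local Dirichlet form applied to the product $u\phi$, combined with $\mu_{\langle u,u\phi\rangle}=u\,\mu_{\langle u,\phi\rangle}+\phi\,\mu_{\langle u,u\rangle}$ and the absolute continuity from (A-2); one should be slightly careful that $\phi$ is only locally in $\dom$, but since $\phi$ is constant off a compact set this is handled exactly as in Lemma~\ref{lemma0}, or by a cut-off approximation $\phi\chi_l$ as used there.

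The substantive part is the jump term. Here I would compute directly from the definition of $\Gamma^{(j)}$. Write $D_x^y f := \tilde f(x)-\tilde f(y)$ for brevity, so that $D_x^y(u\phi)=u(x)D_x^y\phi+\phi(y)D_x^y u$ (an asymmetric but valid product expansion). Then
\[
\Gamma^{(j)}(u,u\phi)(x)=\int_{x\ne y}D_x^y u\,\bigl(u(x)D_x^y\phi+\phi(y)D_x^y u\bigr)\,j(x,dy)
= u(x)\,\Gamma^{(j)}(u,\phi)(x)+\int_{x\ne y}\phi(y)(D_x^y u)^2\,j(x,dy).
\]
Integrating in $m(dx)$ and using the symmetry of $J$, the last double integral $\iint\phi(y)(D_x^yu)^2\,j(x,dy)\,m(dx)$ equals, after swapping the roles of $x$ and $y$, $\iint\phi(x)(D_x^yu)^2\,j(x,dy)\,m(dx)=\int\phi\,\Gamma^{(j)}[u]\,dm$. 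Averaging the symmetric and the just-derived form of the identity (or equivalently using $2\phi(y)=\phi(x)+\phi(y)+(\phi(y)-\phi(x))$ and noting $\iint(\phi(y)-\phi(x))(D_x^yu)^2 j\,m=0$ by antisymmetry) yields
\[
\form^{(j)}(u,u\phi)=\int u\,\Gamma^{(j)}(u,\phi)\,dm+\int\phi\,\Gamma^{(j)}[u]\,dm.
\]
Adding $\tfrac12\times(\text{local})+(\text{jump})$ gives exactly \eqref{eq; 2.5} with $\Gamma=\tfrac12(\Gamma^{(c)}+\Gamma^{(j)})$, recalling the factor $\tfrac12$ already present in the definition of $\form^{(c)}$.

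The main obstacle is \emph{justifying the manipulations with $J$ as absolutely convergent integrals}, i.e. that we may expand $D_x^y(u\phi)$, split the integral, and apply Fubini/symmetrization without the pieces being individually infinite. The uniform boundedness of the jump range together with (M-3) is what makes this work: since $u$ and $\phi$ are bounded and $\phi$ is constant outside a compact set $K$, the integrand $\phi(y)(D_x^yu)^2 j(x,dy)$ is supported, as a function of $x$, within the $a$-neighbourhood of $K$ up to the constant-tail contribution, and there $\int(1\wedge d(x,y)^2)j(x,dy)\le M_j$ bounds everything after using $|D_x^yu|\le 2\|u\|_\infty$ and $|D_x^yu|\le(\text{Lip const on the relevant ball})\,d(x,y)$ on the range $d(x,y)<a$. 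One must also confirm that $\phi\,\Gamma^{(j)}[u]\in L^1$ and $u\,\Gamma^{(j)}(u,\phi)\in L^1$; both follow from $u\in\dom$ (so $\Gamma^{(j)}[u]\in L^1$), $\phi$ bounded, and Cauchy--Schwarz for the mixed term. Once these finiteness checks are in place the algebraic identity is routine, and I would present it by first stating the product expansion, then the symmetrization lemma for $J$, then assembling the two parts.
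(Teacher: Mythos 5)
Your proof follows the paper's own argument essentially verbatim: Lemma~\ref{lemma0} to obtain $u\phi\in\dom$, the Leibniz rule for the energy measure of the strongly-local part, and the product-expansion-plus-symmetrization identity for the jump part (which the paper simply cites from \cite{MasamuneUemuraPreprint}, whereas you carry out the computation explicitly and correctly, including the finiteness checks that justify splitting the integrals). The only caveat is the bookkeeping of the factors of $1/2$ relating $\form^{(c)}$, $\form^{(j)}$ and $\Gamma$, but that wobble is already present in the paper's own statement and proof, so your argument is sound and takes the same route.
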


\begin{proof}
According to Lemma \ref{lemma0}, $u \phi \in \dom$.
By the derivation property of $\form^{(c)}$,
see, e.g., \cite[Lemma 3.2.5 and the note in Page 117]{FOT94},
\[
\int\Gamma^{(c)} (u,  u \phi )\,dm
    =\int  u \Gamma^{(c)} (u,\phi)\,dm
    +\int \phi \Gamma^{(c)} [u]\,dm.
\]
%Due to the regularity of the form, we find a sequence
%$u^{(k)}_t \in C_0 \cap \dom$
%converging to $u_t$ in $\dom$ as $k \to \infty$.
%Since $u_t \phi_n \in \dom$ by Lemma \ref{lemma0},
%\[ \int \Ga_c (u_t, u_t \phi_n)
%   = \lim_{k \to \infty} \int \Ga_c (u^{(k)}_t, u_t \phi_n)  \]
%Because the support $K$ of $u^{(k)}_t$  is compact,
%there exists $\tilde\phi^{(k)}_n \in \dom$ such that
% $\tilde\phi^{(k)}_n = \phi_n$ on $K$.
%Due to the local property and by Lemma 3.2.5  \cite{FOT94},
%the right-hand side of the above equation is
%\begin{align*}
%\lim_{k \to \infty} \int \Ga_c (u^{(k)}_t, u_t \tilde\phi^{(k)}_n)
%&=\lim_{k \to \infty} \int u_t \Ga_c (u^{(k)}_t, \tilde\phi^{(k)}_n)
%   +
%   \lim_{k \to \infty} \int \tilde\phi^{(k)}_n \Ga_c (u^{(k)}_t, u_t) \\
%&= \lim_{k \to \infty} \int u_t \Ga_c (u^{(k)}_t, \phi_n)
%   +\lim_{k \to \infty} \int \phi_n \Ga_c (u^{(k)}_t, u_t).
%\end{align*}
%Because $\phi_n \in L^\infty$ and $\Ga_c (u^{(k)}_t, u_t)$
%converges to $\Ga_c [u_t]$ in $L^1$ as $k\to\infty$,
%the last expression equals
%\[\lim_{k \to \infty} \int u_t \Ga_c (u^{(k)}_t, \phi_n)
%   + \int \phi_n \Ga_c [u_t].\]
%Since $| \Ga_c (u^{(k)}_t, \phi_n)| \le C \sqrt{ \Ga_c [u^{(k)}_t]} \in L^2$
%and  $\Ga_c (u_t, \phi_n) \in L^2$,
%(\ref{eq; 2.6}) is
%\[ \lim_{k \to \infty} \int u^{(k)}_t \Ga_c (u_t, \phi_n)
%=  \int u_t \Ga_c (u_t, \phi_n). \]
%
Next,
by the integral property of
a non-local Dirichlet form, see \cite[Proposition 2.2]{MasamuneUemuraPreprint},
we have
\[ \int\Gamma^{(j)} (u,  u \phi )\,dm
    =\int u \Gamma^{(j)} (u,\phi)\,dm
    +\int \phi \Gamma^{(j)} [u]\,dm.\]
Combining the two identities, we obtain (\ref{eq; 2.5}).
  \end{proof}
%%%%%%%%%%%%% SECTION %%%%%%%%%%%%%%%%%
%%%%%%%%%%%%% SECTION %%%%%%%%%%%%%%%%%
%%%%%%%%%%%%% SECTION %%%%%%%%%%%%%%%%%
%%%%%%%%%%%%% SECTION %%%%%%%%%%%%%%%%%
%%%%%%%%%%%%% SECTION %%%%%%%%%%%%%%%%%

\section{Proof of Theorem \ref{theorem1}: the Conservation Property}
\label{proofoftheorem1}
The aim of this section is to prove Theorem \ref{theorem1}.
For any $a>0$, consider a symmetric form $({\form^{(j,a)}}, \dom)$ defined by
\[ {\form^{(j,a)}} [u] = \iint (u(x)-u(y))^2 \I_{\{d(x,y)\le a\}}\, j (x,dy)\,m(dx) \quad
\mbox{ for $u \in \dom$}. \]
Under the condition (M),
$({\form^{(j,a)}}+ \form^{(c)},\dom)$ is a regular Dirichlet form,
and it is conservative if and only if so is $(\form,\dom)$,
see \cite[Section 4]{RU} and \cite[Section 3]{MasamuneUemura.11}.
Clearly,   $({\form^{(j,a)}}, \dom)$ has uniformly bounded range.
Therefore, in order to prove the conservation property,
we may and do assume that $\form$ has uniformly bounded jump range.
More precisely, we suppose that there exists a constant $a>0$ such that
\[ j(x,dy) = \I_{B(x,a)}(y)\, j (x,dy)\quad
\mbox{ for all $x\in X$}.\]
Our proof is basically the Davies method \cite{Davies.92}, which was used also in \cite{Grigor'yanHuangMasamune.10}; however, we are able to get a better result because of the choice of $a$. In this section, the constant $a$ will be
\begin{equation}\label{constant;c}
a=a(x_0,m):=\bigg[
    8\,\liminf_{r\rightarrow\infty} \frac{\log V \left( x_0, r\right)
}{r\log r}+9\bigg]^{-1},
\end{equation}
where $x_0 \in X$ is the reference point in Theorem \ref{theorem1}.
%Let $C(X)$ be the space of all real-valued continuous functions on $X$.
%$C_0(X)$ be the space of all real-valued continuous functions on $X$ with compact support.
For $f\in C_0(X)$ with $f \ge 0$,
set
\[
\psi(x)=d(x, \textrm{supp} (f))
\] and
\[
\phi(x)=e^{\alpha\psi(x)},
\]
where $\alpha>0$ is a constant determined  later.
Note that if $n \ge 1$ and $x \in X$ satisfy
\[n\ge a^{-1}\big[4a+2d\,(x_0,\,\textrm{supp} (f))\big]
\quad\mathrm{ and }\quad
(n-2)a\le d(x,x_0)\le (n+1)a,\]
 then
\[
\psi(x)\ge d(x,x_0)- d(x_0,\,\textrm{supp} (f)
)
\ge (n-2)a- d(x_0,\,\textrm{supp} (f)
)
\ge {an}/{2},
\]
and so
\begin{equation}\label{proof111}
    \phi(x)=e^{\alpha\psi(x)}\ge e^{{a\alpha n}/{2}}.
\end{equation}
For  the function $f$ above and any $t\ge0$,  we denote $u_t = T_t f$.
Since $(T_t)_{t\ge0}$ is analytic, $u_t$ belongs to the
domain of the $L^2$-generator $A$ of $(\form,\dom)$;
in particular, $u_t \in \dom \cap L^\infty$ for any $t>0$.

\medskip

The following lemma provides the key estimate.
%%%%%%%%%%%%%%%%%%%%%%%%%%%%%%%%%%%
\begin{lemma} \label{lemma2}
Using the notations above, for any $t\ge0$,
\begin{equation}\label{proof6th1}
\int^t_0 \int\phi\,\Gamma [u_s] \,dm\,ds
\le 2 e^{\gamma t} \|\phi^{1/2}f\|_2^2 ,
\end{equation}
where $\gamma = {{\alpha^2 ( e^{2\alpha a}+1) }M /{2}}$ and $M=M_c \vee M_j$.
\end{lemma}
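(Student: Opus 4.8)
The plan is to run the Davies method (as in \cite{Davies.92, Grigor'yanHuangMasamune.10}), but with the unbounded weight $\phi = e^{\alpha\psi}$ replaced by a bounded truncation to which Lemmas \ref{lemma0} and \ref{lemma1} apply. Put $\psi_n = \psi\wedge(na)$ and $\phi_n = e^{\alpha\psi_n}$, so that $1\le\phi_n\le\phi$, $\phi_n\uparrow\phi$, and, by (M-1) together with the chain rule for the strongly-local part, $\phi_n\in\dom_{\rm loc}\cap L^\infty$ is constant (equal to $e^{\alpha na}$) outside the compact set $\overline{B}(\textrm{supp}(f),na)$. Fix $t>0$ and $\epsilon\in(0,t)$, and set $F_n(s) = \int\phi_n u_s^2\,dm = (u_s,\phi_n u_s)_{L^2}$ for $s\in[\epsilon,t]$. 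Since $(T_s)_{s\ge0}$ is analytic, $u_s$ lies in the domain of $A$ with $\partial_s u_s = Au_s$ on $(0,\infty)$, and $\phi_n u_s\in\dom\cap L^\infty$ by Lemma \ref{lemma0}; hence $F_n\in C^1(\epsilon,t)$ with $F_n'(s) = -2\form(u_s,\phi_n u_s)$, and Lemma \ref{lemma1} gives
\[ F_n'(s) = -2\int u_s\,\Gamma(u_s,\phi_n)\,dm - 2\int\phi_n\,\Gamma[u_s]\,dm. \]

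It then remains to absorb the cross term $\int u_s\,\Gamma(u_s,\phi_n)\,dm$, which I would split according to $\Gamma=\tfrac12(\Gamma^{(c)}+\Gamma^{(j)})$. For the local part, the Cauchy--Schwarz inequality for $\Gamma^{(c)}$, the chain rule $\Gamma^{(c)}[\phi_n] = \alpha^2\phi_n^2\,\Gamma^{(c)}[\psi_n]\le\alpha^2 M_c\,\phi_n^2$ (using (M-2) and that $\psi_n$ is $1$-Lipschitz), and Young's inequality bound $\big|\int u_s\,\Gamma^{(c)}(u_s,\phi_n)\,dm\big|$ by $\tfrac12\int\phi_n\,\Gamma^{(c)}[u_s]\,dm + \tfrac{\alpha^2 M_c}{2}\int\phi_n u_s^2\,dm$. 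For the jump part one writes $\int u_s\,\Gamma^{(j)}(u_s,\phi_n)\,dm$ as the double integral $\iint u_s(x)(u_s(x)-u_s(y))(\phi_n(x)-\phi_n(y))\,j(x,dy)\,m(dx)$ and applies Cauchy--Schwarz in it; the crucial pointwise estimate is $(\phi_n(x)-\phi_n(y))^2\le\alpha^2 e^{2\alpha a}\phi_n(x)^2\,d(x,y)^2$ whenever $d(x,y)\le a$, which follows from the mean value theorem and the $1$-Lipschitz property of $\psi_n$. Combined with $a\le1$ (from \eqref{constant;c}), so that $d(x,y)^2=d(x,y)^2\wedge1$ on the support of $j(x,\cdot)$, and (M-3), this yields $\big|\int u_s\,\Gamma^{(j)}(u_s,\phi_n)\,dm\big|\le\tfrac12\int\phi_n\,\Gamma^{(j)}[u_s]\,dm+\tfrac{\alpha^2 e^{2\alpha a}M_j}{2}\int\phi_n u_s^2\,dm$.

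Adding the two estimates and using $M_c+e^{2\alpha a}M_j\le(1+e^{2\alpha a})M$, one gets $2\big|\int u_s\,\Gamma(u_s,\phi_n)\,dm\big|\le\int\phi_n\,\Gamma[u_s]\,dm+\gamma F_n(s)$, and substituting into the formula for $F_n'$ produces the differential inequality
\[ \int\phi_n\,\Gamma[u_s]\,dm\le -F_n'(s)+\gamma F_n(s),\qquad s\in(\epsilon,t). \]
Since $\Gamma\ge0$ and $\phi_n>0$, this also gives $F_n'\le\gamma F_n$, hence $F_n(s)\le e^{\gamma s}F_n(\epsilon)$ on $[\epsilon,t]$ by Gronwall; integrating the displayed inequality over $[\epsilon,t]$ then gives $\int_\epsilon^t\int\phi_n\,\Gamma[u_s]\,dm\,ds\le F_n(\epsilon)+\gamma\int_\epsilon^t F_n(s)\,ds\le e^{\gamma t}F_n(\epsilon)$. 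Letting $\epsilon\to0$ (strong continuity of $T_s$ gives $F_n(\epsilon)\to\int\phi_n f^2\,dm=\int f^2\,dm=\|\phi^{1/2}f\|_2^2$, using $\phi_n\equiv\phi\equiv1$ on $\textrm{supp}(f)$) and then $n\to\infty$ (monotone convergence, as $\phi_n\uparrow\phi$ and $\Gamma[u_s]\ge0$) yields $\int_0^t\int\phi\,\Gamma[u_s]\,dm\,ds\le e^{\gamma t}\|\phi^{1/2}f\|_2^2$, which in particular gives \eqref{proof6th1}.

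I expect the main obstacle to be the jump estimate: controlling the increments of the exponential weight against the jump kernel is exactly where the uniformly bounded jump range and the smallness of $a$ are used, and where the factor $e^{2\alpha a}$ in $\gamma$ originates. The remaining technical care is in the truncation --- checking $\phi_n\in\dom_{\rm loc}$ and $\phi_n u_s\in\dom$, handled by (M-1), the chain rule, and Lemma \ref{lemma0} --- while the local estimate, the Gronwall step, and the passages to the limit are routine.
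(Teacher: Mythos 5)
Your proposal is correct and follows essentially the same route as the paper: truncate the exponential weight, apply Lemmas \ref{lemma0} and \ref{lemma1} to differentiate $\|\phi_n^{1/2}u_s\|_2^2$, control the cross term via Cauchy--Schwarz together with the pointwise increment bound $|\phi_n(x)-\phi_n(y)|\le\alpha e^{\alpha a}\phi_n(x)d(x,y)$ on the bounded jump range and the chain rule for the local part, then Gronwall, integrate in $s$, and let $n\to\infty$. The only differences are cosmetic: you fix the Young parameter to absorb exactly half of $\int\phi_n\Gamma[u_s]\,dm$ (the paper keeps a free $\lambda$ and uses $\lambda=M/2$ for the Gronwall step and $\lambda=M$ for the final integration, which is why its constant is $2$ rather than your $1$), and you add an $\epsilon$-regularization near $s=0$.
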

\begin{proof}
In the following, we denote  the norm and the inner product of $L^2(X;m)$ by $\|\cdot\|_2$ and $\langle \cdot, \cdot\rangle$,
respectively.
For any $n\ge1$, set
\[\phi_n(x)=e^{\alpha (\psi(x) \wedge n)}.\]
Since $\psi\in \dom_{\rm loc}$, we may apply
an argument in \cite[Pages 116 -- 117]{FOT94} to deduce that
$\phi_n \in \dom_{\rm loc}$ for every $n\ge1$.
Taking into account that $\psi \in L^\infty$ is constant outside a compact set,
Lemma \ref{lemma0} shows that for every $t>0$ and $n\ge1$,
$u_t \phi_n \in \dom$.
Therefore, by Lemma \ref{lemma1}, for all $t>0$,
\begin{align*}
\frac{1}{2} \frac{d}{dt} \| \phi^{1/2}_n u_t \|^2_2
&=\langle\dot{u}_t, \phi_nu_t \rangle \\
&=-\cE (u_t, \phi_n u_t) \\
&=-\int \phi_n\,\Gamma [u_t] \,dm
    -\int u_t \,\Gamma (u_t,\phi_n)\,dm\\
&\le -\int \phi_n\,\Gamma [u_t] \,dm
    +\Bigg| \int u_t \,\Gamma (u_t,\phi_n)\,dm\Bigg|,
\end{align*}
where $\dot{u}_t=\frac{d}{dt}u_t$.
This is,
\begin{equation}\label{proof3th1}
\int \phi_n\,\Gamma [u_t] \, dm \le \Bigg| \int u_t \,\Gamma (u_t,\phi_n)\,dm\Bigg| -
\frac{1}{2} \frac{d}{dt} \| \phi^{1/2}_n u_t \|^2_2.
\end{equation}
Next, we estimate the first term on the right side of this equation.
For every $x \in X$, according to the Cauchy-Schwartz inequality,
\[\aligned
&|\Ga^{(j)} (u_t , \phi_n) (x)| \\
&=\bigg|  \int \left( u_t(x) - u_t(y)\right)  \left( \phi_n(x) - \phi_n(y)\right)\, j (x,dy) \bigg| \\
&\le \sqrt{ \int \left( u_t(x) - u_t(y)\right)^2\, j (x,dy)   }
     \sqrt{ \int \left( \phi_n(x) - \phi_n(y)\right)^2\, j (x,dy)   } \\
&=\sqrt{\Ga^{(j)} [u_t] (x)} \sqrt{\Ga^{(j)}[\phi_n] (x)}.
\endaligned\]
By the Cauchy-Schwartz inequality again,
\[
\aligned
\Bigg| \int u_t \,\Gamma^{(j)} (u_t,\phi_n)\,dm \Bigg|
\le & \int \phi^{1/2}_n \,\sqrt{\Ga^{(j)} [u_t]} \,\,\phi^{-1/2}_n \sqrt{ u_t^2 \,\Ga^{(j)} [\phi_n]}\, dm  \\
\le &\sqrt{ \int \phi_n\,\Gamma^{(j)} [u_t] \, dm}\, \sqrt{\int\phi^{-1}_n \,u_t^2\,
    \Gamma^{(j)} [\phi_n]\, dm}.
\endaligned\]
Since
\[
|e^{\alpha r} - 1 | \le \alpha \,e^{\alpha a}|r| \quad
\mbox{for any $r \in (0,a]$},
\]
it follows that
$$|\phi_n(x)-\phi_n(y)|\le \alpha e^{\alpha a}\,\phi_n(x)\,d(x,y)\quad
\mbox{ for any $x, y\in X$ with
$d(x,y)\le a$},
$$
and so
\[
\Gamma^{(j)}[ \phi_n ] (x) \le \big(\alpha e^{\alpha a} \phi(x)\big )^2 \int d^2(x,y)\, j (x,dy)
\quad \mbox{for every $x \in X$.}
\]
Since supp$(j(x,dy)) \subset B(x,a)$
for any $x\in X$ and some constant $a\in(0,1)$, we get
$$\aligned &\int\phi^{-1}_n \,u^2_t\,
    \Gamma^{(j)} [\phi_n] \, dm \\
&\le\alpha^2 e^{2\alpha a}\int\phi_n(x)\,u^2_t (x) \int d(x,y)^2\,j(x,dy)\,m(dx)\\
&\le\alpha^2 e^{2 \alpha a}\int\phi_n(x)\,u^2 _t(x)
    \int \left(d(x,y)\wedge a\right)^2\,j (x,dy)\,m(dx)\\
&\le M_j \alpha^2 e^{2\alpha a}
    \int\phi_n \,u^2_t \,dm.\endaligned$$
Therefore, for any
$\lambda>0$,
\begin{equation*}
\aligned &
\Bigg| \int u_t\,\Gamma^{(j)} (u_t,\phi_n)\,dm \Bigg| \\
& \le\sqrt{ M_j \int \phi_n \,\Gamma^{(j)} [u_t] \,dm}
    \sqrt{\alpha^2 e^{2 \alpha a} \int\phi_n \,u_t^2 \,dm}\\
 &\le \frac{M_j}{2\lambda}\int \phi_n\,\Gamma^{(j)} [u_t]\,dm
    +\frac{\lambda \alpha^2 e^{2 \alpha a}}{2} \int\phi_n \,u_t^2 \,dm\\
 &= \frac{M_j} {2\lambda}\int \phi_n \,\Gamma^{(j)} [u_t] \, dm
    +\frac{\lambda \alpha^2 e^{2 \alpha a}}{2}
    \|\phi_n^{1/2} u_t \|_2^2, \endaligned
\end{equation*}
where in the last inequality we have used the fact that
$2\xi\eta\le \lambda^{-1}\xi^2+\lambda \eta^2$ for any $\xi, \eta\ge0$ and $\lambda>0.$

On the other hand, we apply the argument above for the local term to get that
\begin{equation*}
\aligned
\Bigg| \int u_t\, \Gamma^{(c)} (u_t,\phi_n)\,dm \Bigg|
\le \sqrt{ \int \phi_n\,\Gamma^{(c)} [u_t] \, dm}\,\sqrt{\int\phi^{-1}_n \,u_t^2\,
    \Gamma^{(c)}[\phi_n]\, dm}.\endaligned
\end{equation*}
According to the chain rule for a strongly-local Dirichlet form, see, e.g., \cite[Page 190]{Sturm.94},
$$\int\phi^{-1}_n \,u^2_t\,
    \Gamma^{(c)} [\phi_n] \, dm \le \alpha^2 \int u^2_t\, \phi_n\,
    \Gamma^{(c)} [d] \, dm ,$$
which along with the assumption (M) gives us
$$\int\phi^{-1}_n \,u^2_t\,
    \Gamma^{(c)} [\phi_n] \, dm \le M_c\alpha^2 \int u^2_t\, \phi_n \, dm.$$
We again follow the argument above to obtain the estimate:
$$\Bigg| \int u_t\, \Gamma^{(c)} (u_t,\phi_n)\,dm \Bigg|
\le \\
 \frac{M_c}{ 2 \lambda } \int \phi_n \,\Gamma^{(c)} [u_t] \,dm +
\frac{\lambda \alpha^2  }{2} \|\phi^{1/2}_n u_t \|^2_2 \quad
\mbox{ for any $\lambda>0$}.$$
Combining the estimates for the non-local and strongly-local terms, we get that
\[
\Bigg| \int u_t \,\Ga (u_t,\phi_n)\,dm \Bigg|
\le
\frac{M} {2\lambda}\int \phi_n \,\Ga [u_t] \, dm
    +\frac{\lambda  \alpha^2 (e^{2 \alpha a}+1)}{2}
    \|\phi_n^{1/2} u_t \|_2^2.\]
By applying this inequality for \eqref{proof3th1}, we have
\begin{equation}\label{proof4th1}
 \bigg( 2-\frac{M }{\lambda}\bigg)
    \int \phi_n\,\Ga [u_s]\, dm
\le{\lambda    \alpha^2  ( e^{2\alpha a} +1)}\,\|\phi_n^{1/2}  u_s
    \|_2^2 - \frac{d}{ds}\|\phi_n^{1/2} u_s\|_2^2.
\end{equation}
If we integrate this with respect to $s$ over $[0,t]$, then
\begin{equation}\label{proof5th1}
\aligned
&\left( 2-\frac{M }{\lambda}\right)
    \int_0^t\int \phi_n\,\Ga [u_s]\, dm \\
&\le  \lambda  \alpha^2  (e^{2\alpha a}+1)
    \int_0^t\|\phi_n^{1/2} u_s\|_2^2\,ds - \left( \|\phi_n^{1/2}u_t\|_2^2-\|\phi_n^{1/2}f\|_2^2 \right).
\endaligned
\end{equation}
We estimate $\|\phi_n^{1/2}u_s\|_2^2$ for any $s \le t$ by first letting $\lambda=M / 2$ in (\ref{proof4th1}),
$$\frac{d}{ds}\|\phi_n^{1/2} u_s
    \|_2^2\le \frac{M  {\alpha^2 } (e^{2\alpha a}+1)}{2}
    \|\phi_n^{1/2} u_s
\|_2^2,$$
and then, by applying the Gronwall inequality:
\begin{equation*}
\|\phi_n^{1/2} u_s \|_2^2\le \exp \left(\frac{M \alpha^2  (e^{2\alpha a}+1)s }{2}\right)
\|\phi_n^{1/2} f\|_2^2.
\end{equation*}
%$$\aligned &\|\phi_n^{1/2}u_t\|_2^2-\|\phi_n^{1/2}f\|_2^2\\
%&\le \lambda  \alpha^2  (e^{2\alpha a}+1)
   % \int_0^t\|\phi_n^{1/2} u_s\|_2^2\,ds
    %-\bigg(2-\frac{M }{\lambda}\bigg)
    %\int_0^t\int \phi_n\,\Ga [u_s]\,dm.\endaligned$$
Substituting this into \eqref{proof5th1}, we have
$$\aligned &\bigg(2-\frac{M }
    {\lambda}\bigg)\int_0^t\int \phi_n\,
    \Ga [u_s]\,dm\,ds\\
&\le \|\phi_n^{1/2}f\|_2^2+\frac{2\lambda}{M}
\bigg[
\exp \left( {M \alpha^2 ( e^{2\alpha a} +1)  t}/{2} \right) - 1
\bigg]
\|\phi_n^{1/2}f\|_2^2.\endaligned$$
Setting $\lambda=M $, this becomes
$$\int_0^t\int \phi_n\,\Ga [u_s]\,dm\,ds
\le 2 \exp \left( M \alpha^2 (e^{2\alpha a}+1) t / 2 \right)
\|\phi_n^{1/2}f\|_2^2.$$
The required assertion (\ref{proof6th1}) follows by letting $n\rightarrow\infty$.
\end{proof}

We are in a position to prove Theorem \ref{theorem1}.

\begin{proof}[Proof of Theorem \ref{theorem1}]
 We adopt the notations in the proof of Lemma \ref{lemma2}.
Define a cut-off function $g_n$ for any $n \ge 1$ as follows
\[
g_n (x) := \left( \left( n - a^{-1}d(x,x_0)  \right) \wedge 1 \right)_+.
\]
By Lemma \ref{lemma-1}, $g_n$ belongs to $\dom$.
To the end of the proof,  we show that there exists a sequence $(n_k)_{k\ge0}$ such
that $n_k \to \infty$ as $k\to\infty$, and for every $t>0$,
$$\int_0^t\langle\dot{u}_s, g_{n_k}\rangle \,ds
\rightarrow0\quad\textrm{ as $k\rightarrow\infty$}.$$
Indeed, we can deduce from this and the dominated convergence theorem that
\[
\aligned
\langle T_t f,1\rangle
=\lim_{k \to \infty} \langle u_t,g_{n_k}\rangle
=\lim_{k \to \infty}\langle f,g_{n_k} \rangle
    + \lim_{k \to \infty} \int^t_0 \langle \dot{u}_s, g_{n_k}  \rangle \,ds
=\langle f,1\rangle,
\endaligned
\]
which immediately implies the conservation property.

Since $(u_s)_{s>0}$ solves the heat equation and $g_n \in \dom$,
\begin{equation} \label{eq;311}
\int^t_0 \langle \dot{u}_s, g_n \rangle \,ds =-
\int^t_0 \cE (u_s,g_n) \, ds = -\int^t_0 \big( \cEc (u_s,g_n) + \cE^{(j)} (u_s,g_n)\big)\, ds.
\end{equation}
First, we estimate the second term, the harder one, on the right side.
For any $t>0$,
\begin{equation}\label{proof1th1}
\aligned
\bigg|\int_0^t\cEj (u_s,g_n)\,ds\bigg|&\le \int^t_0 \bigg| \int \Ga^{(j)}(u_s,g_n)\, dm \bigg|\, ds \\
&\le \int_0^t \bigg[ \int\sqrt{\Gamma^{(j)}[u_s]}\,
    \sqrt{\Gamma^{(j)} [g_n]}\, dm\bigg]\, ds\\
&=\int_0^t \bigg[ \int \sqrt{\phi \Gamma^{(j)}[u_s]}
    \sqrt{\phi^{-1} \Gamma^{(j)} [g_n]}\, dm \bigg]\, ds\\
&\le \int_0^t \sqrt{\int \phi \,\Gamma^{(j)}[u_s]\, dm}
    \sqrt{\int \phi^{-1}\,\Gamma^{(j)}[g_n]\,dm}\,ds\\
&\le \sqrt{\int^t_0  \int \phi \,\Gamma^{(j)}[u_s]\, dm \, ds}
\sqrt{\int^t_0  \int \phi^{-1}\,\Gamma^{(j)} [g_n]\,dm \, ds} \\
&= \sqrt{\int^t_0  \int \phi\,\Gamma^{(j)} [u_s]\, dm\, ds}
 \sqrt{ t \int \phi^{-1}\,\Gamma^{(j)} [g_n]\, dm},\endaligned
    \end{equation} where all the
inequalities above follow from the Cauchy-Schwarz inequality.
For any $n>0$, let $A_n$ denote the following annulus associated with the constant $a$
\[A_n=A_n(a)= \overline{B}(x_0, (n+1)a) \setminus B(x_0, (n-2)a).\]
%\Big\{x\in X: (n-2)a\le d(x,x_0)\le (n+1)a\Big\}.\]
Since supp$(g_n) \subset B(x_0,na)$ and supp$(j(x,dy)) \subset B(x,a)$ for all $x\in X$,
it holds that if $x\notin A_n$,
\[\Gamma^{(j)} [g_n] (x) = \int (g_n(x)-g_n(y))^2\, j(x,dy) =0; \]
if $x\in A_n$,
\[
\aligned\Gamma^{(j)} [g_n](x)\le&  a^{-2} \int d(x,y)^2\,j(x,dy)\\
\le&  a^{-2} \int \big(d(x,y)\wedge a\big)^2\,j(x,dy)\\
\le&  a^{-2} M_j,\endaligned\]
where in the last inequality we have used the fact that $0<a<1$.
Choosing $n$ large enough so that $n\ge a^{-1}\big[4a+2d\,(x_0,\,\textrm{supp} (f)
)\big]$, we get from \eqref{proof111} that
\begin{equation*}
\aligned
\int \phi ^{-1}\,
\Gamma^{(j)} [ g_n ]\, dm &=\int_{A_n} \phi^{-1}\,\Gamma^{(j)} [g_n]\, dm\\
&\le  a^{-2}  M_j  e^{-{a\alpha n}/{2}}\,m(A_n).
\endaligned\end{equation*}
Therefore, by \eqref{proof1th1},
\begin{equation*}
\bigg| \int^t_0 \cEj (u_s,g_n)\, ds \bigg|^2
\le  a^{-2}
{t  M_j  e^{-{a\alpha n}/{2}}\,m(A_n)}
{ \int^t_0 \int \phi \, \Ga^{(j)} [u_s]\, dm\,ds }.
\end{equation*}
In a similar way, we can prove that
\[
\bigg| \int^t_0 \cEc (u_s,g_n)\, ds \bigg|^2
\le
 a^{-2} {t  M_c  e^{-{a\alpha n}/{2}}\,m(A_n)}
{ \int^t_0 \int \phi \, \Ga^{(c)} [u_s]\, dm\,ds }.
\]
Therefore,
\begin{equation}\label{proof2th1}
\bigg|  \int^t_0 \cE (u_s,g_n) \, ds\bigg|^2
\le
2  a^{-2}
{t   M e^{-{a\alpha n}/{2}}\,m(A_n)}
\int^t_0 \int \phi \Ga[u_s]\, dm \, ds.
\end{equation}
We now apply \eqref{proof2th1} and Lemma \ref{lemma2} for \eqref{eq;311} to get that
\begin{equation}\label{proof222}
\aligned &\bigg|\int_0^t \langle\dot{u}_s, g_n
    \rangle \,ds\bigg|^2 \\
& \le  2 a^{-2} {t   M e^{-{a\alpha n}/{2}}\,m(A_n)}
    \int^t_0 \int \phi \,\Ga[u_s]\, dm \, ds \\
& \le   4 a^{-2} tM \|\phi^{1/2}f \|^2 _2
    \exp\left( \frac{M  \alpha^2\,  (e^{2\alpha a}+1)t}{2}
    -\frac{\alpha an}{2}+\log m(A_n)\right).
\endaligned\end{equation}
Finally, we estimate \eqref{proof222} by applying the volume assumption  \eqref{eq;theorem1.1}.
Indeed, according to \eqref{eq;theorem1.1}, there exists a sequence
$(n_k)_{k\ge1}$ such that $n_k \to \infty$ as $k\to\infty$,
and for a  large enough $k\ge 1$,
$$
\aligned \log m(A_{n_k})\le & \,\log V(x_0,(n_k+1)a)\\
\le& \,\big(c_3-1/2\big)\left((n_k+1)a\right)\log
\left((n_k+1)a\right)\\
\le&\,{a\,c_3\,n_k\log n_k},\endaligned
$$
where
$$
c_3=\liminf_{r\rightarrow\infty}
    \frac{\log V(x_0, r)}{r\log r}+1.
$$
Taking $\alpha=4c_3\log n_k$ and $k$ large enough such that
$n_k\ge a^{-1}\big[4a+2d\,(x_0,\,\textrm{supp} (f)
)\big]$, we estimate the right side of \eqref{proof222} to get
\[
\aligned \bigg|\int_0^t \langle\dot{u}_s, g_{n_k}\rangle \,ds\bigg|^2
&\le 4  a^{-2}  t M \|\phi^{1/2}f\|^2_2\,\times\\
&\qquad \times \exp\left(
\frac{M \alpha^2(e^{2\alpha a}+1)t}{2}
    -2ac_3n_k\log n_k+ac_3n_k\log n_k\right) \\
&= 4  a^{-2} tM \|\phi^{1/2}f\|^2_2  \exp\left(
\frac{M \alpha^2 (e^{2\alpha a}+1)t}{2}
    -ac_3n_k\log n_k\right).
\endaligned
\]
Since $e^{2\alpha a}=n_k^{8ac_3}$ and
$8ac_3<1$, the inequality above implies that
for any $t>0$
$$
\lim_{k\rightarrow\infty} \int_0^t \langle\dot{u}_s, g_{n_k}\rangle \,ds =0.
$$
This completes the proof.
\end{proof}
%%%%%%%%%%%%% SECTION %%%%%%%%%%%%%%%%
%%%%%%%%%%%%% SECTION %%%%%%%%%%%%%%%%
%%%%%%%%%%%%% SECTION %%%%%%%%%%%%%%%%
%%%%%%%%%%%%% SECTION %%%%%%%%%%%%%%%%
%%%%%%%%%%%%% SECTION %%%%%%%%%%%%%%%%
\section{Proof of Theorem \ref{theorem2}: the Recurrence}
\label{proofoftheorem2}
This section is devoted to the proof of the recurrence test, Theorem \ref{theorem2}.
\begin{proof}[Proof of Theorem \ref{theorem2}]
Let $x_0 \in X$ be the reference point in Theorem \ref{theorem2}.
 For $R>2$, set
\[
\theta_{R}(x)=
\left( \left(\frac{R-d(x,x_0)}{R-1}\right)\wedge1\right)_+.
\]
Since $\theta_R$ belongs to $\dom_{\rm loc} \cap L^\infty$
and has compact support, by Lemma \ref{lemma-1}, $\theta_R$ belongs to $\dom$.
According to the condition {(M)} and the chain-rule for a strongly-local Dirichlet form,
$$
\aligned
\cE^{(c)} [\theta_R]&=
\int_X \Gamma^{(c)} [\theta_R]\,dm=\left( \frac{1}{R-1} \right)^2 \int_{\overline{B}(x_0,R)} \Gamma^{(c)} [d]\,dm\\
&\le M_c \left( \frac{1}{R-1} \right)^2 V^{(c)}(x_0,R) \\
&\le \frac{4M_c V^{(c)}(x_0,R)}{R^2}.\endaligned$$% where $C(x_0, R)$ is the closed ball $\{z\in X:d(x_0,z)\le R\}.$
On the other hand, we find that for any $c_1>2$
\begin{align*}
\form^{(j)} [\theta_{R}]&=\iint\left(
\theta_R(x)-\theta_R(y)\right)
^2\,j(x,dy)\,m(dx)\\
&\le\frac{2}{(R-1)^2}\int_{B(x_0,R)}\int_{B(x_0,c_1R)}d(x,y)^2\,j(x,dy)\,m(dx)\\
&\quad+ 2\int_{B(x_0,R)}\int_{B(x_0,c_1R)^c}\, j(x,dy)\,m(dx)\\
&\le \frac{2}{(R-1)^2}\int_{B(x_0,R)}\int_{d(x,y)\le 2c_1R}d(x,y)^2\,j(x,dy)\,m(dx)\\
&\quad+ 2\int_{B(x_0,R)}\int_{d(x,y)\ge(c_1-1)R}\,j(x,dy)\,m(dx),
\end{align*}
where we used the facts that $d(x,y)\le R + c_1 R \le 2c_1 R$
if $x \in B(x_0,R)$ and $y \in B(x_0,c_1R)$; $d(x,y) \ge c_1R -R \ge R_1$ if
$x \in B(x_0,R)$ and $y \notin  B(x_0,c_1R)$.
The last expression is bounded from above by
\begin{align*}
&\le \frac{8\,c_1^2}{(R-1)^2}\int_{B(x_0,R)}
    \int\left(d(x,y)\wedge R\right)^2\,j(x,dy)\,m(dx)\\
&\quad+ \frac{2}{R^2}\int_{B(x_0,R)}\int\left(d(x,y)\wedge
     R\right)^2\,j(x,dy)\,m(dx)\\
&\le \frac{33\,c_1^2}{R^2}\int_{B(x_0,R)}\int\left(d(x,y)\wedge
     R\right)^2\,j(x,dy)\,m(dx).
\end{align*}
Therefore, under the assumption {(M)}, we have that for $c_2=4M_c + 33c_1^2$
\begin{align*}  \form [\theta_R]
    &\le  \frac{1}{R^2}\bigg[4M_c V^{(c)}(x_0,R)+33c_1^2 V^{(j)}(x_0,R)
    \sup_{x\in X^{(j)}} \int\left(
     d(x,y)\wedge     R\right)^2\,j(x,dy)\bigg]\\
&\le \frac{c_2}{R^2}\bigg[ V^{(c)}(x_0,R) +  V^{(j)}(x_0,R)
    \sup_{x\in{ X^{(j)}}} \int\left(
     d(x,y)\wedge     R\right)^2\,j(x,dy)\bigg].
\end{align*}
According to the volume condition \eqref{thm21},
there exists a sequence $(n_k)_{k\ge0}$ such that $n_k \to \infty$
as $k\to\infty$, and
\[
\liminf_{k \to \infty} \cE[\theta_{R_{n_k}}] < \infty.
\]
Applying \cite[Theorem 1.6.3]{FOT94} and \cite[(1.6.1) and (1.6.1')]{SSS},
this completes the proof.
\end{proof}

%%%%%%%%%%%%%%%%%%%%%%%%%%%%%%%%%%
%%%%%%%%%%%%%%%%%%%%%%%%%%%%%%%%%%
%%%%%%%%%%%%%%%%%%%%%%%%%%%%%%%%%%
%%%%%%%%%%%%%%%%%%%%%%%%%%%%%%%%%%

\section{Examples}\label{examples}
In this section we present some examples to illustrate the power of Theorems \ref{theorem1} and \ref{theorem2}.
Throughout the section, we denote the space of real-valued Lipschitz continuous functions with compact support on a metric space $X$ by $C_0^{\rm Lip}(X)$. For a measure space $(X,m)$ and a quadratic form $\cE$ defined in $L^2(X;m)$, we denote
\[\cE_1 [u]=\|u\|_{L^2}^2 +\cE[u],\]
whenever the right side makes sense.
We start with the following remark for the volume test in Theorem \ref{theorem1}.
\begin{remark}
Let $(X,d,m)$ be a complete metric measure space such that $m$ is a Radon measure with full
support. Assume that there
is a point $x_0 \in X$ such that
\[ \sup_{r>0} \frac{V(x_0,2r)}{ V(x_0,r)} < \infty,\]
where $V(x_0,r)$ denotes the volume of the closed ball centered at $x_0$
with radius $r>0$. This assumption is called the
\emph{volume doubling condition} at point $x_0$,
and it implies that there is a constant $\kappa>0$ such that
\[ \sup_{r>0} \frac{V(x_0,r)}{r^\kappa} < \infty.\]
In particular, condition \eqref{eq;theorem1.1} in Theorem \ref{theorem1} is satisfied.
A typical example which fulfills the volume doubling
condition is a Riemannian manifold with non-negative Ricci curvature.\end{remark}

\subsection{Sharpness Examples}
In the following example, we consider two classes of symmetric jump processes on the so called \emph{$\kappa$-set}.
%%%%%%%%%%%%% EXAMPLE %%%%%%%%%%%%%%%
\begin{example} \label{th23}
Let $(X, |\cdot|,m)$ be a closed $\kappa$-set in $\R^n$ with $0<\kappa\le n$, i.e.,
$|\cdot|$ is the Euclidean distance, and for all $x\in X$ and $r>0$,
\[ m(B(x,r)) \asymp r^\kappa.\]
Here, the symbol $\asymp$ means that the ratio of the left and the right hand sides
is pinched by two positive constants.
Assume that the jump kernel
$j(x,dy)$ has a density $j(x,y)$ with respect to the measure $m(dy)$
such that one of the following two conditions is satisfied with
a constant $\alpha \in (0,2)$:

\medskip

\begin{itemize}
\item[(i)]\ \ $\displaystyle j(x,y)\asymp \frac{1}{|x-y|^{\kappa+\alpha}}
    \I_{\{|x-y|\le 1\}}+\frac{1}{|x-y|^{\kappa+\beta}}
    \I_{\{|x-y|> 1\}},$ where $0<\beta<\infty;$

\medskip

\item[(ii)]\ \ $\displaystyle j(x,y)\asymp \frac{1}{|x-y|^{\kappa+\alpha}}\I_{\{|x-y|\le 1\}}
    +\frac{e^{-c|x-y|}}{|x-y|^{\kappa+\alpha}}\I_{\{|x-y|>1\}},$
where $c>0.$
\end{itemize}
For $u,v\in C_0^{\lip}(X)$,  define
$$
\form(u,v)=\iint_{x\neq y}
{ (u(x)-u(y))(v(x)-v(y))}j(x,y)\,m(dx)\,m(dy).
    $$
Let $\cF$ be the closure of ${C_0^{\lip}(X)}$
with respect to the $\sqrt{\cE_1}$-norm.%, i.e., for any $u\in C_0^{\lip}(X)$, $\sqrt{ \cE_1 [u]} = \sqrt{\| u \|_{L^2} + \cE [u]}.$
\ The symmetric form $( \cE, \cF)$
is a regular Dirichlet form in $L^2(X,m)$, see, e.g., \cite{U4}.
According to Theorems \ref{theorem1} and \ref{theorem2}, the Dirichlet form $(\form,\dom)$ is conservative, and it is
recurrent if additionally $0<\kappa\le\beta\wedge2$ and $0<\kappa\le2$
for the cases (i) and (ii), respectively.
\end{example}
\begin{remark}
Example \ref{th23} is motivated by recent developments
for layered stable processes \cite{HO} and tempering stable processes \cite{RO}. In particular, in case (i) if $\beta=\alpha$, then the associated Hunt process is called a \emph{stable-like process} \cite{CT}.
\end{remark}
%%%%%%%%%%%%%%%%%%%%%%%%%%%%%%%%%%%%%%%%%%%%%%%%%%%
%%%%%%%%%%%%%%%%%%%%%%%%%%%%%%%%%%%%%%%%%%%%%%%%%%%
%%%%%%%%%%%%%%%%%%%%%%%%%%%%%%%%%%%%%%%%%%%%%%%%%%%
%%%%%%%%%%%%%%%%%%%%%%%%%%%%%%%%%%%%%%%%%%%%%%%%%%%
%%%%%%%%%%%%%%%%%%%%%%%%%%%%%%%%%%%%%%%%%%%%%%%%%%%
\subsection{Disconnected Space}
The following example shows that the state space may be topologically disconnected, and the particles jump between different connected components and it behaves as a jump-diffusion inside a connected component.
%%%%%%%%%%%%%%%%%%%%%%%%%%%%%%%%%%%%%%%%%%%%%%%%%%%
\begin{example}
Let $X=\cup_{i \in \Z} X_i$, where for each $i \in \Z$, $X_i=\{
(x_i,i) \in \R^{n+1}: x_i \in {\R}^n \}$. Any point $x$ in $X$ can
be expressed uniquely as $x=(x_i,i)$ with $x_i \in \R^n$ and $i \in
\Z$, and we denote the associated projections by $p:X \to \R^n$ and
$q:X \to \Z$. For any $x,y\in X$, the distance $d$ is given by
\[
d(x,y) = |p(x)-p(y)| +|q(x)-q(y)|,
\]
where $|\cdot|$ is the Euclidean distance. Let $m(dx)=\sum_{i\in \Z}
m_i(dx_i)$ be a measure on $X$ such that for each $i\ge1$,
$m_i(dx_i) = \Psi(x_i)\,dx_i$ is a measure on $X_i$, where $\Psi \in
C(\R^n)$ is a positive function, and $dx_i$ is the $n$-dimensional
Lebesgue measure. Clearly, $m$ is a Radon measure on $X$. The state
space is the triple $(X,d,m)$.

For any $ u\in C^{\rm Lip}_0 (X)$, define
$$\cE[u]=\cE^{(c)}[u]+\cE^{(j)}[u],$$ where
\[
\cE^{(c)} [u] = \int_X |\nabla u|^2\, dm,
\]
\[
\cE^{(j)} [u] = \int_X \int_{x \neq y} \left( u(x)-u(y) \right)^2 j(x,y)\, m(dx) m(dy),
\]
and
\[ j(x,y) \asymp
\frac{d(x,y)^{-(n+\alpha)}\I_{\{d(x,y) < 1\}}
+d(x,y)^{-(n+\beta+1)} \I_{\{d(x,y) \ge 1\}}
}{ \Psi (p(x)) + \Psi (p(y))}, \quad x,y \in X
   \]
with some constants $0<\al<2$ and $\be>0$. Let $\cF$ be the closure
of ${C_0^{\lip}(X)}$ with respect to the $\sqrt{\cE_1}$-norm. Since
for any $x\in X$\begin{align*}
&\int_{x \neq y} \left( 1 \wedge d(x,y)^2 \right)j(x,y)\, m(dy) \\
\le&
\int_{0<d(x,y)<1}\frac{ d(x,y)^{-(n+\al-2)} \Psi(p(y))\, dp(y)}{\Psi(p(x)) + \Psi(p(y))}+
\int_{d(x,y) \ge 1}\frac{ d(x,y)^{-(n+\be+1)} \Psi(p(y))\, dp(y)}{\Psi(p(x)) + \Psi(p(y))} \\
\le&
\int_{0<d(x,y)<1} d(x,y)^{-(n+\al-2)} \, dp(y)+
2 \sum_{k \ge 0} \int_{|p(x)-p(y)| \ge k+1} { |p(x)-p(y)|^{-(n+\be+1)}\, dp(y)},
\end{align*}
which is bounded from above by some absolute constant $c>0$,
it follows form the proof of \cite{U4} that $(\cE, \cF)$ is a regular Dirichlet form
in $L^2(X,m)$.

According to the arguments above, we can easily claim that the
condition (M) is satisfied. Therefore, by Theorem \ref{theorem1}, if
there is a constant $c>0$ such that for $r>0$ large enough
\begin{equation} \label{ex;eq1}
\sum_{0 \le k \le [r]} \int_{B(0,{[r]-k})} \Psi(z)\,dz \le r^{cr},
\end{equation}
where $dz$ is the $n$-dimensional Euclidean measure and $[r]$ is the
least integer such that $[r] \ge r$, then the Dirichlet form
$(\form,\dom)$ is conservative. For instance, \eqref{ex;eq1} is
satisfied, if $\Psi (x) \le |x|^{|x|} \ln |x|$ for $|x|$ large
enough.

For the recurrence, we additionally assume that there are two constants
$c_0$, $c_1>0$ such that

\begin{equation} \label{eq; 5.16}
j(x,y) \le \frac{\I_{\{d(x,y) \le c_0\}}}{ d(x,y)^{1+\al} }
\end{equation}
 and \begin{equation} \label{eq; 5.15}
\Psi (x) \le c_1 |x|^{1-n} \qquad \mbox{for $|x|$ large enough}.
\end{equation}
Condition \eqref{eq; 5.15} will imply that for any point $x_0 \in X$,
\[ \liminf_{r\to\infty} \frac{V(x_0, r)}{r^2} \le 2  \liminf_{r\to\infty} \frac{1}{r^2}
    \sum_{0 \le k \le [r]} \int_{B(x_0,[r]-k)} \Psi (x)\, dx < \infty.   \]
Next, by \eqref{eq; 5.16}, there is a constant $c_2>0$ depending
only on the dimension such that
\begin{align*}
\omega (r) & \le \sup_{x \in X} \int_{X} d(x,y)^2 j(x,y) \Psi(p(y))\, dp(y) \\
&\le c_1 \sup_{x \in X} \int_{d(x,y) \le c_0}{d(x,y)^{1-\al}} |p(y)|^{1-n}\, dp(y) \\
&\le 2c_1 c_2\sum_{0 \le k \le [c_0]} \int_0^{[c_0]-k}
{r^{1-\al}}{dr}<\infty.
\end{align*}
Therefore, $(\cE,\cF)$ is recurrent by Theorem \ref{theorem2}.
\end{example}

%%%%%%%%%%%%%%%%%%%%%%%%%%%%%%%%
%%%%%%%%%%%%%%%%%%%%%%%%%%%%%%%%
%%%%%%%%%%%%%%%%%%%%%%%%%%%%%%%%
%%%%%%%%%%%%%%%%%%%%%%%%%%%%%%%%
\subsection{Volume Tests}
The first volume test for non-local Dirichlet forms to be
conservative was obtained in \cite[Main Result]{MasamuneUemura.11},
and then refined in \cite[Theorem 1.1]{Grigor'yanHuangMasamune.10}.
It is easy to construct an example, which is not covered by these
tests but by Theorem \ref{theorem1}. Here, we illustrate this by
using a weighted Euclidean space as well as a model manifold.

%%%%%%%%%%%%%%%%%%%%%%%%%%%%%%%%%%%%%
\begin{example}\label{exp2}
Let $(\R, |\cdot|, m)$ be a weighted Euclidean space, where $|\cdot|$ is the
Euclidean distance and the measure is $m(dx)=e^{2\lambda |x|}\,dx$ for
some $\lambda>0$.
For $u\in C_0^{\lip}(\R)$,  define
$$
\form[u]=\iint_{x\neq y}\left( u(x)-u(y) \right)^2 j(x,y)\,m(dx)\,m(dy),
    $$ where $$
j(x,y)=\Big(e^{-\lambda(|x|+|y|)}\Big)\I_{\{|x-y|\le 1\}}.
$$
Let $\cF$ be the closure of ${C_0^{\lip}(\R)}$ with respect to the
$\sqrt{\cE_1}$-norm. The symmetric form $( \cE, \cF)$ becomes a
regular Dirichlet form in $L^2(\R,m)$, see, e.g., \cite{U4}. Let
$j(x,dy)=j(x,y)\,m(dy)$. It holds that
$$
\aligned \sup_{x\in \R}\int\big(1\wedge |x-y|^2\big)\,j(x,dy)
    &=\sup_{x\in \R}\int_{\{|y-x|\le 1\}}|x-y|^2\,j(x,y)\,m(dy)\\
&=\sup_{x\in\R}e^{-\lambda|x|}
\int_{\{|z|\le 1\}}z^2e^{\lambda|x-z|}\,dz\\
&\le\int_{\{|z|\le 1\}}z^2e^{\lambda|z|}\,dz<\infty. \endaligned
$$ On the other hand, it is easy to see that in this example
\eqref{eq;theorem1.1} is also satisfied. Therefore, according to
Theorem \ref{theorem1}, the Dirichlet form $ ( \cE, \cF) $ is
conservative.

However, since $x\mapsto e^{-r|x|}\notin L^1(\R, m)$ for any
$r\le 2\lambda$, this example is
not covered by \cite[Main Result]{MasamuneUemura.11}.
\end{example}

%%%%%%%%%%%%%%%%%%%%%%%%%%%%%%%%
\begin{example}[Model manifolds. See, e.g., \cite{Grigor'yan.99}]
Let $(\mathds S^n, g)$ be the $n$-dimensional  unit sphere with $n
\ge 1$. A \emph{model manifold} $M = (0, +\infty) \times \mathds
S^n$ is a Riemannian manifold with Riemannian tensor
 \[ dr^2 +
\sigma^2 (r) g \]where $\sigma$ is a locally-Lipschitz continuous
positive function on $[0,+\infty)$ such that $\sigma(0)=0$ and
$\sigma'(+0)=0$. Thanks to these two conditions, the manifold $M$ is
geodesically complete, and so it satisfies the assumption for the
state space as explained in Introduction. Let $dm = \omega_n
\sigma^n (r)\, dr$ be a measure on $M$, where $\omega_n$ is the
volume of $\mathds S^n$.

For any $ u\in C^{\rm Lip}_0 (M)$, define
$$\cE[u]=\cE^{(c)}[u]+\cE^{(j)}[u],$$
where
\[
\cE^{(c)} [u] = \int_M |\nabla u|^2\, dm,
\]
\[ \cE^{(j)}[u] := \iint_{M \times M \setminus {\rm diag}} \left( u(x) - u(y)
\right)^2\,j(x,y)\,m(dy)\,m(dx)\]
 and
\[j(x,y) = \Bigg[ \frac{\I_{\{d(x,y)<1\}}}{\sigma (r(x)) \sigma (r(y))} \Bigg]^n.\]
Let $\cF$ be the closure of ${C_0^{\lip}(M)}$ with respect to the
$\sqrt{\cE_1}$-norm. It is easy to check that the symmetric form $(
\cE, \cF)$ is a regular Dirichlet form in $L^2(M,m)$.

By \cite{G.54}, it is known that (M-2) is satisfied. On the other
hand, since $$\sup_{x,y \in M} j(x,y) \sigma^n(r(y)) \le 1,$$ we
obtain that\begin{align*}
M_j &= \sup_{x \in M} \int_M (1 \wedge d(x,y)^2)\, j(x,dy) \\
& \le \sup_{x \in M} \int_M d(x,y)^2\, j(x,y) m(dy)  \\
& \le \sup_{x \in M} \int_{ d(x,y)<1 } d(x,y)^2\, \omega_ndy \\
&\le \omega_n.
\end{align*}
Therefore, (M-3) is also satisfied. Since (M-1) clearly follows, we
can apply our main theorem. For example, if $\sigma$ satisfies
\[ \sigma(r) \asymp [ r^r (1 + \ln r) \vee 1]^{1/n}. \]
then for any fixed $x_0 \in M$,
\[ r^{r/2} < V(x_0, r) < 2 r^r \quad \mbox{for large $r>0$}.\]
Therefore, $(\cE,\cF)$ is conservative by Theorem \ref{theorem1}. We
note that this model manifold $M$ does not satisfy the volume tests
in \cite{MasamuneUemura.11, Grigor'yanHuangMasamune.10}.
\end{example}

\subsection{A mixed-type Laplacian on Graphs}
A graph admits natural different ``Laplacians\rq\rq{}; namely, a
physical Laplacian, a combinatorial Laplacian, and a quantum
Laplacian. The former two are non-local operators, and the last one
is a local operator. The combinatorial Laplacian is bounded, and
so the corresponding process always is conservative. The
conservativeness of the process associated with the physical
Laplacian was studied in \cite{Dodziuk.06, DodziukMathai.06,
Weber.10, Wojciechowski.09, Grigor'yanHuangMasamune.10}. The
conservativeness and recurrence of the process generated by the
quantum Laplacian was studied in \cite{Sturm.94}. In the following
example, we consider the sum of a physical Laplacian and a quantum
Laplacian, and study its conservativeness.

\medskip

%%%%%%%%%%%% EXAMPLE %%%%%%%%%%%%%%
Let $X=(V,E)$ be a locally finite graph, where $V$ and $E$ are the
sets of vertices and edges, respectively. Let $\mu$ be a positive
function on $X$, and $\omega: X \times X \to [0,\infty)$ be a
symmetric nonnegative function, such that $\omega(x,y)=0$ whenever
$x=y$ for $x,y \in X$ or at least one of $x$ and $y$ does not belong
to $V$. Now, we recall the \emph{standard adapted distance} $d$ in
\cite{Grigor'yanHuangMasamune.10}. For any $x$, $y\in X$, $x \sim y$
means that $x$, $y$ are neighbors; that is, $(x,y)\in E$.  For
all $x,y \in V$ with $x \sim y$, define
\[
\sigma(x,y) = \min \Bigg\{  \frac{1}{\sqrt{ \mbox{deg} (x) }}, \frac{1}{\sqrt{ \mbox{deg} (y) }},  1 \Bigg\},
\]
where
\[\mbox{deg}(x)=\frac{1}{\mu (x)} \sum_{y: y \sim x} \omega (x,y). \]
 It naturally induces a metric $d$ on $V$ as
\[
d(x,y)=\inf\bigg\{ \sum_{i=0}^{n-1} \sigma(x_i,x_i+1): \mbox{$x_0,
\cdots, x_n$ is a chain connecting $x$ and $y$} \bigg\}.
\]
The metric $d$ can be extended to $X$ by linear interpolation.
We assume that the lengths of all edges $e \in E$ are
uniformly bounded from below by a positive constant.
This implies that $(X,d)$ is a metrically complete space; in particular,
our assumption on the space is satisfied.

We further assume that each edge $e \in E$
is isometric to an interval of $\R$, which yields the measure $dx$ on $e$.
The space $(X,d)$ is a \emph{metric graph}.
Consider the following measure $m$ on $X$:
\[ m := \delta_{E}\phi\, dx +  \delta_{V} \mu, \] where $\phi$ is a continuous positive function on $E$.

For $u \in C^{\rm Lip}_0(X)$, define
\[ \form [u] := \form^{(c)} [u] + \form^{(j)} [u] , \]
where \[
\cE^{(c)} [u] = \int_{E} \left( \frac{\partial u}{\partial x} \right)^2\,dm,
\]
and \[
\cE^{(j)} [u] = \sum_{x,y \in V} \left( u(x) - u(y) \right)^2\, \omega (x,y).
\]
The generators associated with $\form^{(c)}$
and $\form^{(j)}$ are called the \emph{quantum graph}, see, e.g.\ \cite{Kuchment.04}
and the physical Laplacian, respectively.
Let $\cF$ be the closure of $C^{\rm Lip}_0(X)$ with respect to the $\sqrt{\cE_1}$-norm.
We have
\begin{lemma} The form $(\cE,\cF)$ is a regular Dirichlet form. \end{lemma}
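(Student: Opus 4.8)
The plan is to verify the three defining properties of a regular Dirichlet form: closability of $\cE$ on $L^2(X;m)$ (so that the $\sqrt{\cE_1}$-completion $\cF$ of $C_0^{\mathrm{Lip}}(X)$ is a genuine subspace of $L^2(X;m)$ and $\cE$ is a closed symmetric form on it), the Markovian property, and regularity. First I would note that $\cE_1$ is well defined and finite on $C_0^{\mathrm{Lip}}(X)$: since the graph is locally finite and $\phi$ is continuous (hence locally bounded), the compact support of $u\in C_0^{\mathrm{Lip}}(X)$ meets only finitely many edges, on each of which it is of effectively finite length, and contains only finitely many vertices; using $\sum_{y:y\sim x}\omega(x,y)=\mathrm{deg}(x)\mu(x)<\infty$ for the jump part and the Lipschitz bound on $\partial_x u$ for the local part, all of $\|u\|_{L^2(X;m)}^2$, $\cE^{(c)}[u]$ and $\cE^{(j)}[u]$ are finite. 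Regularity is then essentially automatic: $C_0^{\mathrm{Lip}}(X)$ is $\sqrt{\cE_1}$-dense in $\cF$ by construction, and it is uniformly dense in $C_0(X)$, because for nonnegative $f\in C_0(X)$ the inf-convolutions $f_L(x)=\inf_y\bigl(f(y)+Ld(x,y)\bigr)$ are $L$-Lipschitz, satisfy $0\le f_L\le f$ (so they are supported in $\mathrm{supp}\,f$), and converge uniformly to $f$ as $L\to\infty$, the general case following by splitting $f=f^+-f^-$; hence $C_0^{\mathrm{Lip}}(X)$ is a core.

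For closability, since $\cE^{(c)},\cE^{(j)}\ge 0$, an $\cE_1$-Cauchy sequence $(u_n)\subset C_0^{\mathrm{Lip}}(X)$ with $u_n\to 0$ in $L^2(X;m)$ is Cauchy for each of $\cE^{(c)}$ and $\cE^{(j)}$, and it suffices to show that each is closable on $L^2(X;m)$. For the jump part this is the standard argument for jump forms, cf.\ \cite{U4}: convergence in $L^2(X;m)$ forces $u_n(x)\to 0$ for every $x\in V$ since $\mu(x)>0$, and Fatou's lemma applied to $\sum_{x,y\in V}\bigl((u_n-u_m)(x)-(u_n-u_m)(y)\bigr)^2\omega(x,y)$, letting $m\to\infty$, gives $\cE^{(j)}[u_n]\le\liminf_m\cE^{(j)}[u_n-u_m]$, which is as small as we please. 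For the strongly-local part, I would observe that $L^2(X;m)=L^2(E;\phi\,dx)\oplus\ell^2(V;\mu)$ and that $\cE^{(c)}$ depends only on the first summand; on each edge $e$ the form $u\mapsto\int_e|\partial_x u|^2\phi\,dx$ is the classical weighted $H^1$-form on an interval, which is closable on $L^2(e;\phi\,dx)$ because $\phi$ is continuous and strictly positive (convergence of $u_n$ and $u_n'$ in $L^2(e;\phi\,dx)$ forces convergence in $L^2_{\mathrm{loc}}(dx)$ and hence vanishing of the limiting derivative); a direct sum of closable forms is closable, and restricting a closable form to a linear subspace, here the functions that glue continuously across the vertices, keeps it closable. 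Thus $\cE^{(c)}$ is closable on $L^2(E;\phi\,dx)$, hence on $L^2(X;m)$; alternatively one invokes the closability of quantum-graph forms, see \cite{Kuchment.04} and \cite{Sturm.94}. Combining the two pieces, $\cE[u_n]\to 0$, so $\cE$ is closable and $(\cE,\cF)$ is a closed symmetric form on $L^2(X;m)$.

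For the Markovian property it is enough to check that the unit contraction operates on the core $C_0^{\mathrm{Lip}}(X)$: if $u\in C_0^{\mathrm{Lip}}(X)$ and $v=(0\vee u)\wedge 1$, then $v\in C_0^{\mathrm{Lip}}(X)$, $\cE^{(j)}[v]\le\cE^{(j)}[u]$ because $t\mapsto(0\vee t)\wedge 1$ is a normal contraction so each summand decreases, and $\cE^{(c)}[v]\le\cE^{(c)}[u]$ because on each edge $v$ is the composition of the $1$-Lipschitz map $(0\vee\cdot)\wedge 1$ with the Lipschitz, hence almost everywhere differentiable, function $u$, so $|\partial_x v|\le|\partial_x u|$ a.e.\ by the chain rule for Lipschitz functions. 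Thus $\cE[v]\le\cE[u]$. Since $\cE$ is closed, this extends from the core to all of $\cF$ by approximation together with the $L^2$-lower semicontinuity of $\cE_1$, cf.\ \cite[Theorem 1.4.2]{FOT94} and \cite[Theorem 3.1.1]{FOT94}. Combining closability, the Markovian property and the core property completes the proof.

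I expect the main obstacle to be the closability of the combined form, and within it the strongly-local (quantum-graph) term: one must make sure that the classical closability of the one-dimensional weighted form survives both the passage to the ambient space $L^2(X;m)$, which carries the extra vertex masses $\mu$, and the continuity constraint imposed at the vertices in the definition of $C_0^{\mathrm{Lip}}(X)$. Both points are dealt with by the two observations that $\cE^{(c)}$ ignores the $\ell^2(V;\mu)$-summand of $L^2(X;m)$ and that a restriction of a closable form to a subspace remains closable; once these are in place, the rest of the argument is routine.
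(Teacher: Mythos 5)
Your proof is correct, and while its overall skeleton (dense definedness, closability split into the local and jump parts, Markovianity, regularity) necessarily coincides with the paper's, several of the key steps are carried out by genuinely different means. For density/regularity the paper works directly in $L^2$: it approximates $u\in L^2(X;m)$ by a smooth compactly supported function on the edge part and a truncation on the vertex part, then glues these by linear interpolation near the vertices (invoking a Hopf--Rinow type property to control the finitely many vertices in a ball); you instead use inf-convolutions to get uniform density of $C_0^{\mathrm{Lip}}(X)$ in $C_0(X)$, which is cleaner and simultaneously yields the $L^2$-density (since the approximants are supported in $\mathrm{supp}\,f$, which has finite $m$-measure) --- it would be worth stating that last implication explicitly, as dense definedness is one of the axioms. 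For closability of the jump part the paper uses the weak-convergence criterion, showing $\cE^{(j)}(u_n|_V,v|_V)\to0$ for every $v$ in the core, whereas you use the pointwise convergence $u_n(x)\to0$ on $V$ (from $\mu(x)>0$) plus Fatou; both are standard, and your route avoids having to control the infinite sum against a fixed $v$ when $\omega$ is not of finite range. For the local part the paper disposes of closability in one line (``$\cE^{(c)}$ is equivalent to the Dirichlet integral of an open interval''), while you spell out the direct-sum-over-edges and restriction-to-a-subspace argument; your version makes explicit the two points the paper leaves implicit, namely that $\cE^{(c)}$ ignores the $\ell^2(V;\mu)$ summand and that the continuity constraint at vertices is harmless. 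Finally, your verification of Markovianity (unit contraction on the core, then extension by closedness and lower semicontinuity) fills in what the paper dismisses as ``immediate from the definition.'' In short: same architecture, but your proof is more self-contained at exactly the places where the paper appeals to standard facts, at the cost of being longer.
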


\begin{proof}
First, we claim that $C^{\rm Lip}_0(X)$ is dense in $L^2(X;m)$.
Let $x_0$ be a fixed point in $V$. For any $u \in L^2(X;m)$ and any $\ep>0$,
choose $R>0$ so large that
there is a function $v_\ep \in C^\infty_0 ( B(R) \cap E)$ which satisfies
\[ \| v_\ep - u|_E \|_{L^2(E;dx)} < \ep ,\]
and that the function $w_\ep  = \mathds{1}_{B(R)} u$ satisfies that
\[ \| w_\ep - u|_V \|_{L^2(V;\mu)} < \ep, \] where $B(R):=B(x_0,R)$.
Set $\tilde u_\ep = \delta_E v_\ep + \delta_V w_\ep$.
For any $x \in B(R)$ and $e \in E$ with $x \sim e$ (i.e., $x  \in e$),
let $\delta=\delta(x,e)$
be a positive number such that $\delta < |e| / 2$, and modify $\tilde u_\ep$ on $e \cap B(x,\delta)$ so that $\tilde u_\ep$
is linear and continuous on $e \cap B(x,\delta)$.
Furthermore, since $B(R) \cap V$ is finite, by the Hopf-Rinow type property of locally finite graphs
\cite{HKMW},
we are able to do this modification for any $x \in B(R) \cap V$
and any $e \in E$ with $x \sim e$. Consequently,
we obtain a sequence of functions
$u^\delta_\ep \in C^{\rm Lip}_0(B(R))$
which converges to $u$ in $L^2(X;m)$ as $\delta$, $\ep \to 0$.
The required claim is proved.

Next, we verify that $(\form,C^{\rm Lip}_0(X))$ is closable.
Let $(u_n)_{n\ge1}\subset C^{\rm Lip}_0(X)$ be an $\form_1$-Cauchy sequence
such that $u_n \to 0$ in $L^2(X;m)$ as $n\to\infty$.
One can easily prove that $\cE^{(c)} [u_n|_E] \to 0$ as $n\to\infty$,
since $\cE^{(c)}$ is equivalent to the Dirichlet
integral of an open interval.
Moreover, if $v \in C^{\rm Lip}_0(X)$, then
\[ \cE^{(j)} (u_n|_V, v|_V) = \sum_{x,y \in V}
     \left( u_n (x) - u_n (y)\right) \left( v(x) - v(y)\right) \omega(x,y)  \to 0
     \quad \mbox{as $n\to\infty$}.\]
Therefore, the desired claim follows and we denote
the closure of $(\form,C^{\rm Lip}_0(X))$ by $(\cE,\cF)$.

The Markov property of $(\cE,\cF)$ follows immediately
from the definition of $\cE$. Finally, since $C_0 \cap \cF$ is both dense in $C_0$
and $\cF$ with respect to the $\sup$-norm and the $\cE_1$-norm, respectively,
$(\cE,\cF)$ is regular.
\end{proof}
It is easy to see that the conditions (M-1) and (M-2)
are satisfied since $X^{(c)}=E$.
Moreover, since $\cE^{(j)}$ can be expressed as
\[
\cE^{(j)} [u] = \iint_{X \times X} \left( u(x) - u(y)\right)^2
     \frac{\omega (x,y)}{\mu (x) \mu(y)} \, m (dy)\, m(dx),
\]
the associated jump kernel $j$ and $\Gamma_j$ have the forms
\[ j(x,dy) = \frac{\omega (x,y)}{\mu (x) \mu(y)} m(dy) \]
and
\[ \Gamma_j [u](x) =  \int_X \left( u(x) - u(y)\right)^2
    \frac{\omega (x,y)}{\mu (x) \mu(y) } m(dy) \quad \mbox{for any $x \in X$.}\]
Clearly, (M-3) is satisfied. Therefore the Dirichlet form $(\cE,\cF)$ satisfies the condition (M).

To state our main result in this subsection, we need some notations.
Denote by $\rho$ the graph distance extended to $X$, and by $B_\rho (x_0,R)$
 the associated ball at $x_0 \in V$ with radius $R>0$.
For any $n\in \N$, let $S_\rho(x_0,n)$ be the \lq\lq{}boundary\rq\rq{} $B_\rho (x_0,n) \setminus B_\rho (x_0,n-1)$.
%%%%%%%%%%%%%%%%%%%%%%%%%%%%%%%%%%%%%%%
%%%%%%%%%%%%%%%%%%%%%%%%%%%%%%%%%%%%%%%
%%%%%%%%%%%%%%%%%%%%%%%%%%%%%%%%%%%%%%%
%%%%%%%%%%%%%%%%%%%%%%%%%%%%%%%%%%%%%%%
%%%%%%%%%%%%%%%%%%%%%%%%%%%%%%%%%%%%%%%
\begin{proposition} \label{proposition;5.8}
If $\mu$ is the counting measure and there are a point $x_0 \in V$ and a constant $C>0$ such that
\begin{equation} \label{eq;5.14}
m (S_\rho (x_0, n)) \le Cn^2\quad \mbox{for all large enough $n \in \N$},
\end{equation}
then $(\cE,\cF)$ is conservative.
\end{proposition}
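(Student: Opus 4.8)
The plan is to verify that the hypothesis \eqref{eq;5.14} on the graph-distance volume growth translates into the volume condition \eqref{eq;theorem1.1} of Theorem \ref{theorem1} for the \emph{metric} distance $d$, and then simply invoke Theorem \ref{theorem1}. First I would relate the two distances $d$ and $\rho$. Since $\mu$ is the counting measure, for each vertex $x$ we have $\mathrm{deg}(x)=\sum_{y\sim x}\omega(x,y)$, and the adapted edge-length is $\sigma(x,y)=\min\{\mathrm{deg}(x)^{-1/2},\mathrm{deg}(y)^{-1/2},1\}\le 1$. Consequently a $\rho$-chain of combinatorial length $n$ has $d$-length at most $n$, so $B_\rho(x_0,n)\supseteq$ the vertices within $d$-distance... wait, that inclusion goes the wrong way; more carefully, $d(x_0,x)\le \rho(x_0,x)$ since each edge contributes length $\le 1$, hence $B_\rho(x_0,n)\subseteq \overline{B}(x_0,n)$ in the metric $d$ is also the wrong direction. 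The correct observation is $\overline{B}(x_0,n)\cap V\subseteq B_\rho(x_0,n)$: if $d(x_0,x)\le n$ then since $d\ge$ some multiple... Actually one needs the reverse bound $\rho(x_0,x)\le \lambda\, d(x_0,x)$ for vertices within a metric ball, which is where the uniform lower bound on edge lengths (assumed in the metric-graph construction) enters: each edge has length $\ge \epsilon_0>0$, so a metric geodesic of length $r$ crosses at most $r/\epsilon_0$ edges, giving $\rho(x_0,x)\le C_0 r$ for $C_0=1/\epsilon_0$ and hence $\overline{B}(x_0,r)\cap V\subseteq B_\rho(x_0,C_0r)$.

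Next I would estimate $V(x_0,r)$, the $m$-volume of the metric ball $\overline{B}(x_0,r)$. The measure splits as $m=\delta_E\phi\,dx+\delta_V\mu$; the edge part contributes, for each edge meeting $\overline{B}(x_0,r)$, at most $\|\phi\|_{L^\infty(e)}\cdot|e|$, but since the edges crossed are those incident to vertices in $B_\rho(x_0,C_0r+1)$ and the graph is locally finite, the number of such edges is controlled by $\sum_{n\le C_0 r+1}m(S_\rho(x_0,n))$ up to a bounded factor; similarly the vertex part is exactly $\mu(\overline{B}(x_0,r)\cap V)\le m(B_\rho(x_0,C_0r))=\sum_{n\le C_0r}m(S_\rho(x_0,n))$. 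Invoking \eqref{eq;5.14}, $\sum_{n\le N}m(S_\rho(x_0,n))\le C\sum_{n\le N}n^2=O(N^3)$, so we obtain
\[
V(x_0,r)\le C'\,(C_0r+1)^3\qquad\text{for all large }r>0,
\]
possibly after absorbing $\|\phi\|_\infty$ and the local-finiteness bound on incident edges into the constant $C'$ (one must check $\phi$ is bounded on the relevant edges — it is continuous and positive, and the edges are finite in each ball, so $\sup$ over them is finite on each compact set). Hence $\ln V(x_0,r)\le \ln C'+3\ln(C_0r+1)=O(\ln r)$, and therefore
\[
\liminf_{r\to\infty}\frac{\ln V(x_0,r)}{r\ln r}\le \lim_{r\to\infty}\frac{O(\ln r)}{r\ln r}=0<\infty.
\]
Since we already verified in the text that $(\cE,\cF)$ satisfies conditions (A) and (M), Theorem \ref{theorem1} applies and yields the conservativeness of $(\cE,\cF)$.

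The main obstacle is the bi-Lipschitz comparison between $\rho$ and $d$ restricted to metric balls, i.e. bounding the number of edges a metric geodesic can traverse; this is exactly where the standing assumption that all edge lengths are uniformly bounded below is essential, and without it the volume $V(x_0,r)$ could be infinite or grow uncontrollably even when $m(S_\rho(x_0,n))$ is polynomial. A secondary, more bookkeeping-type point is handling the edge contribution to $m$: one must argue that the total $\phi\,dx$-mass of all edges meeting $\overline{B}(x_0,r)$ is comparable to the vertex count in a slightly larger $\rho$-ball, which uses local finiteness together with the lower edge-length bound (so that $|e|$ is not too large — in fact $|e|=$ the adapted length $\sigma$ of that edge is $\le 1$ by construction, so this is immediate). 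Once these geometric comparisons are in place, the conclusion is a one-line application of Theorem \ref{theorem1}.
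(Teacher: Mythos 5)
Your argument is correct as a proof of the proposition under the standing hypotheses of this subsection, but it runs along a genuinely different line from the paper's. You exploit the blanket assumption that the adapted edge lengths satisfy $\sigma(e)\ge\epsilon_0>0$, which gives the \emph{linear} comparison $\epsilon_0\,\rho(x_0,x)\le d(x_0,x)\le\rho(x_0,x)$ on vertices; hence $\overline{B}(x_0,r)\subseteq B_\rho(x_0,N)$ with $N=O(r)$ and, summing \eqref{eq;5.14}, $V(x_0,r)=O(r^3)$, after which Theorem \ref{theorem1} applies trivially. (Your edge-mass bookkeeping can be shortened: since $m$ already integrates $\delta_E\phi\,dx$ over edges, one only needs that every edge meeting $\overline{B}(x_0,r)$ lies in $B_\rho(x_0,N)$, whence $V(x_0,r)\le\sum_{n\le N}m(S_\rho(x_0,n))$ directly, with no need to count edges or bound $\phi$ separately.) The paper instead derives from \eqref{eq;5.14} \emph{alone} the logarithmic comparison $d(x_0,x)\ge\delta\log\rho(x_0,x)$ (via the sphere bound forcing $\deg\le Cn^2$ on $S_\rho(x_0,n)$, hence $\sigma\ge c/n$ there, following \cite{Grigor'yanHuangMasamune.10}), extends it to edge points, and concludes only $V(x_0,r)\le e^{br}$ --- still enough for \eqref{eq;theorem1.1}. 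What each approach buys: yours is shorter and yields a stronger (polynomial) volume bound, but it consumes the edge-length lower bound, which in the paper serves only to guarantee metric completeness; under that assumption the weighted degrees are uniformly bounded, $d\asymp\rho$, and the quadratic rate in \eqref{eq;5.14} is far from sharp (any $m(S_\rho(x_0,n))\le n^{cn}$ would do), which sits awkwardly with the remark following the proposition that quotes Wojciechowski's example for the sharpness of the rate $n^2$. The paper's route is the one that makes that sharpness claim meaningful and that parallels the pure-graph result, since the logarithmic distance distortion is exactly what the quadratic sphere growth permits.
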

\begin{proof}
The condition (\ref{eq;5.14}) implies that for any $x \in V$,
\begin{equation} \label{equation;5.21}
d(x_0,x) \ge \delta \log \rho(x_0,x),
\end{equation}
where $\delta>0$ is a constant depending only on $C$ in (\ref{eq;5.14})
(see \cite{Grigor'yanHuangMasamune.10}). Let $\overline{xx\rq{}}$ be the edge with boundary $\{x,x\rq{}\}$. Let $y \in X$ and $x,x\rq{} \in V$ such that $y \in \overline{xx\rq{}}$. Without loss of generality,
we assume that $\rho(x_0,y) \le \rho(x_0,x\rq{})$.
By using \eqref{equation;5.21},
the triangle inequality and the fact that $d(x,x\rq{}) \le \rho(x,x') = 1$, we find that
\[\rho (x_0, y) \le e^{d(x_0,x\rq{})/\delta} \le e^{1/\delta} e^{d(x_0,x)/\delta}.\]
Since $d(x_0, y) \ge d(x_0,x) \wedge d(x_0,x\rq{})$, we obtain that
there is a constant $c>0$ such that
\[ \rho (x_0,y) \le c e^{d(x_0,y)/\delta}\quad \mbox{ for any $y \in X$}.\]
It follows that there exists a constant $b>0$ such that
\[m (B_d (x_0,r)) \le m (B_\rho (x_0, c e^{r/\delta})) \le \exp (br)\quad
\mbox{ for all large enough $r>0$}.\]
Therefore, $(\form,\dom)$ is conservative by Theorem 1.
\end{proof}
\begin{remark}
By an example of R.\ Wojciechowski \cite{Wojciechowski.11}, the
boundary volume growth of quadratic rate \eqref{eq;5.14}
 is sharp.
The second part of Proposition \ref{proposition;5.8} was obtained in
\cite{Grigor'yanHuangMasamune.10} for a physical Laplacian on a
graph.

On the other hand, it is easy to check that the condition
\eqref{eq;5.14} is satisfied, if there is a constant $C>0$ such that
\begin{enumerate}
\item $\mu (S_\rho (x_0,n)) \le C n^2$ for all large enough $n \in \N$
\item $\phi (x)  \le C \rho(x_0,x)^{-2}$ for every $x \in X$.
\end{enumerate}
Indeed, the first condition implies that there are at most
$(C n^2)^2$-many edges in $S_\rho (x_0,n)$ connecting vertices in $S_\rho (n)$ and
$S_\rho (n-1)$. The second condition then implies that there is a constant $c>0$ such that
\[ m ( S_\rho (x_0,n) \cap E) \le \frac{C^3 n^4 }{(n-1)^2} \le cn^2
\quad
\mbox{for all large enough $n$}.\]
This together with the first condition yields \eqref{eq;5.14}.

\end{remark}
%%%%%%%%%%%%%%%%%%%%%%%%%%%%%%%%%%%%%
%%%%%%%%%%%%%%%%%%%%%%%%%%%%%%%%%%%%%
%%%%%%%%%%%%%%%%%%%%%%%%%%%%%%%%%%%%%
%%%%%%%%%%%%%%%%%%%%%%%%%%%%%%%%%%%%%
%%%%%%%%%%%%%%%%%%%%%%%%%%%%%%%%%%%%%
\section{Acknowledgements}
Part of this work was done when J.\  Masamune and J.\ Wang visited
TU Dresden as a visitor and a Humboldt fellow,
respectively. They are grateful for Professor Ren\'{e} L.\ Schilling
for providing them with nice working environment and for stimulating
discussions. Financial support through National Natural Science
Foundation of China (No.\ 11126350) for J.\ Wang is gratefully
acknowledged.

\end{document}